\newcommand{\mc}[1]{\mathcal{#1}}
\newcommand{\mb}[1]{\mathbb{#1}}
\newcommand{\mr}[1]{\mathrm{#1}}
\newcommand{\mf}[1]{\mathfrak{#1}}
\newcommand{\on}[1]{\operatorname{#1}}
\newcommand{\hypa}{{\mbox{$_2${\bf F}$_1$}}}
\newcommand{\tx}[1]{\text{#1}}
\theoremstyle{remark}
\newtheorem{exa}{Example}
\newtheorem{rem}{Remark}
\theoremstyle{plain}
\newtheorem{lem}{Lemma}
\newtheorem{thm}[lem]{Theorem}
\newtheorem{cor}[lem]{Corollary}
\newtheorem{pro}[lem]{Proposition}
\date{26th November 2014}
\title{Basis properties of the $p,q$-sine functions} 
\author{Lyonell Boulton}
\author{Gabriel J. Lord}
\email{L.Boulton@hw.ac.uk \quad and \quad G.J.Lord@hw.ac.uk}
\address{Department of Mathematics and Maxwell Institute for Mathematical
Sciences, Heriot-Watt University, Edinburgh, EH14 4AS, UK}
\begin{document}

\begin{abstract}
We improve the currently known thresholds for basisness of the family of periodically 
dilated $p,q$-sine functions. Our findings rely on a Beurling  decomposition 
of the corresponding change of coordinates in terms of shift operators of 
infinite multiplicity. We also determine refined bounds on the Riesz constant 
associated to this family. These results  seal mathematical gaps 
in the existing  literature on the subject.
\end{abstract}

\maketitle

\tableofcontents


\newpage

\section{Introduction}
Let $p,q>1$.  Let $F_{p,q}:[0,1]\longrightarrow [0,\pi_{p,q}/2]$ be the integral
\[
     F_{p,q}(y)=\int_0^y \frac{\mr{d} x}{(1-x^q)^{\frac1p}} 
\]
where $\pi_{p,q}=2F_{p,q}(1)$. The $p,q$-\emph{sine functions},  
$
\sin_{p,q}:\mb{R}\longrightarrow [-1,1],
$
are defined to be the inverses of $F_{p,q}$,
\[
\sin_{p,q}(x)=F^{-1}_{p,q}(x) \qquad  \text{for all} \qquad x\in[0,\pi_{p,q}/2]
\]
extended to $\mb{R}$ by the rules
\[
      \sin_{p,q}(-x)=-\sin_{p,q}(x) \qquad \text{and} \qquad
      \sin_{p,q}(\pi_{p,q}/2-x)= \sin_{p,q}(\pi_{p,q}/2+x),
\]
which make them periodic, continuous, odd with respect to 0 and even with respect to $\frac{\pi_{p,q}}{2}$.
These are natural generalisations of the sine function, indeed 
\[\sin_{2,2}(x)=\sin(x) \qquad \text{and} \qquad \pi_{2,2}=\pi,\]
 and they are known to share a number of remarkable properties with their classical counterpart \cite{Lindqvist1995,EdmundsLang2011}. 

 Among these properties lies the fundamental question of completeness and linear independence of the family $\mc{S}=\{s_n\}_{n=1}^\infty$ where $s_n(x)=\sin_{p,q}(\pi_{p,q}nx)$. This question has received some attention recently \cite{EdmundsGurkaLang2012,BushellEdmunds2012,EdmundsLang2011, BBCDG2006}, with a particular emphasis 
 on the case $p=q$. In the latter instance, $\mathcal{S}$ is the set of eigenfunctions of the generalised eigenvalue problem for the one-dimensional $p$-Laplacian subject to Dirichlet boundary conditions \cite{BindingDrabek2003,BL2010}, which is known to be of relevance in the theory of slow/fast diffusion processes, \cite{EFG1997}. 
See also the related papers \cite{EdmundsGurkaLang2014-2,EdmundsGurkaLang2014-1}.
   
Set $e_n(x)=\sqrt{2}\sin(n\pi x)$, so that $\{e_n\}_{n=1}^\infty$ is a Schauder basis of the Banach space $L^r\equiv L^r(0,1)$ for all $r>1$.  The family $\mc{S}$ is also a Schauder basis of $L^r$ if and only if the corresponding \emph{change of coordinates map}, $A:e_n\longmapsto s_n$, extends to a linear homeomorphism of $L^r$. 
The Fourier coefficients of $s_n(x)$ associated to $e_k$ obey the relation
\begin{align*}
    \widehat{ s_n}(k) &= \int_0^1 s_1(n x) e_k(x) \mr{d}x\\
         &=\sum_{m=1}^\infty \widehat{s_1}(m)  \int_0^1 e_{mn}(x) e_k(x) \mr{d}x = \left\{ 
         \begin{aligned}  \widehat{s_1}(m) & \quad \text{if $mn=k$ for some $m\in \mb{N}$} \\
         0 & \quad \text{otherwise.} \end{aligned} \right.
\end{align*}
For $j\in \mb{N}$, 
let \[a_j\equiv a_j(p,q)=\widehat{s_1}(j) = \sqrt{2} \int_0^1
\sin_{p,q}(\pi_{p,q}x)\sin(j\pi x) \mr{d}x
\] 
(note that $a_j=0$ for $j\equiv_2 0$) and let $M_j$ be the linear
isometry such that $M_je_k=e_{jk}$. Then 
\begin{align*}
  Ae_n&=s_n=\sum_{k=1}^\infty \widehat{s_n}(k)e_k=\sum_{j=1}^\infty \widehat{s_1}(j) e_{jn} 
= \left( \sum_{j=1}^\infty  a_j M_j  \right) e_n,
\end{align*}
so that the change of coordinates takes the form
\begin{equation} \label{decoA}
      A=\sum_{j=1}^\infty a_j M_j.
 \end{equation}

Notions of ``nearness'' between bases of Banach spaces are known to play a fundamental role in classical mathematical analysis, \cite[p.265-266]{Kato1967}, \cite[\S I.9]{Singer1970} or \cite[p.71]{Higgins1977}. Unfortunately, the expansion \eqref{decoA} strongly suggests that $\mc{S}$ is not globally ``near'' $\{e_n\}_{n=1}^\infty$, e.g. in the Krein-Lyusternik or the Paley-Wiener  sense, \cite[p.106]{Singer1970}. Therefore classical arguments, such as those involving the Paley-Wiener Stability Theorem, are unlikely to be directly applicable in the present context.

In fact, more rudimentary methods can be invoked in order to examine the invertibility of the change of coordinates map. From \eqref{decoA} it follows that 
\begin{equation} \label{trick1}
      \sum_{j=3}^\infty |a_j| <|a_1| \quad \Rightarrow \quad \left\{\begin{aligned} &A,A^{-1}\in \mc{B}(L^r) \\
      & \|A\|\|A^{-1}\|\leq \frac{\sum_{j=1}^\infty |a_j|}{|a_1|-\sum_{j=3}^\infty |a_j|}. \end{aligned} \right. 
\end{equation}
In \cite{BBCDG2006} it was claimed that the left side of \eqref{trick1} held true for all $p=q\geq p_1$ where $p_1$ was determined to lie in the segment $\left(1,\frac{12}{11}\right)$. Hence  $\mc{S}$ would be a Schauder basis, whenever $p=q\in (p_1,\infty)$. 

Further developments in this respect were recently reported by Bushell and Edmunds \cite{BushellEdmunds2012}. These authors cleverly fixed a gap originally published in \cite[Lemma~5]{BBCDG2006} and observed that, as the left side of \eqref{trick1} ceases to hold true whenever  
\begin{equation}   \label{break2}
      a_1= \sum_{j=3}^\infty a_j,
\end{equation}
the argument will break for $p=q$ near $p_2\approx 1.043989$.  Therefore, the basisness question for $\mc{S}$ should be tackled by different means in the regime $p,q\to 1$.

More recently \cite{EdmundsGurkaLang2012}, Edmunds, Gurka and Lang, employed \eqref{trick1} in order to show invertibility of $A$ for general pairs $(p,q)$, as long as
 \begin{equation} \label{constrpq}
     \pi_{p,q}<\frac{16}{\pi^2-8}.
 \end{equation}
Since \eqref{constrpq} is guaranteed whenever
\begin{equation} \label{constrpq2}
      \frac{p}{q(p-1)}<\frac{4}{\pi^2-8},
\end{equation}
this allows $q\to 1$ for $p>\frac{4}{12-\pi^2}$. 
However, note that a direct substitution of $p=q$ in \eqref{constrpq2}, only leads to the sub-optimal condition $p>\frac{\pi^2}{4}-1\approx 1.467401$. 

In Section~\ref{linearind} below we show that the family $\mc{S}$ is $\omega$-\emph{linearly independent} for all $p,q>1$, see Theorem~\ref{likernelandspan}. In Section~\ref{ribap} we establish conditions ensuring that $A$ is a homeomorphism of $L^2$ in a neighbourhood of the region in the $(p,q)$-plane where
\[
      \sum_{j=3}^\infty |a_j|=a_1,
\]
see Theorem~\ref{inprovement} and also Corollary~\ref{beyonda}. For this purpose, in Section~\ref{criteria} we find two further criteria which generalise \eqref{trick1} in the Hilbert space setting, see corollaries~\ref{main_1} and \ref{main_2}. In this case, the \emph{Riesz constant}, 
\[
    r(\mc{S})=\|A\| \|A^{-1}\|
\]
characterises how $\mc{S}$ deviates from being an orthonormal basis. These new statements yield upper bounds for $r(\mc{S})$, which improve upon those obtained from the right side of \eqref{trick1}, even when the latter is applicable.

The formulation of the alternatives to \eqref{trick1} presented below relies crucially on work developed in Section~\ref{toep_s}. From  Lemma~\ref{multareshifts} we compute explicitly the Wold decomposition of the isometries $M_j$: they turn out to be shifts of infinite multiplicity. Hence we can extract from the expansion   \eqref{decoA} suitable components  which are Toeplitz operators of scalar type acting on appropriate Hardy spaces. As the theory becomes quite technical for the case $r\not=2$ and all the estimates analogous to those reported below would involve a dependence on the parameter $r$, we have chosen to restrict our attention with regards to these improvements only to the already interesting Hilbert space setting.

Section~\ref{casep=q} is concerned with particular details of the case
of equal indices $p=q$, and it involves results on both the general
case $r>1$ and the specific case $r=2$. Rather curiously, we have
found another gap which renders incomplete the proof of invertibility
of $A$ for $p_1<p<2$  originally published in \cite{BBCDG2006}. See
Remark~\ref{rem_gap}. 
Moreover, the application of \cite[Theorem~4.5]{BushellEdmunds2012} only gets to a \emph{basisness threshold} of $\tilde{p}_1\approx 1.198236>\frac{12}{11}$, where $\tilde{p}_1$ is defined by the identity
\begin{equation}   \label{bueva}
     \pi_{\tilde{p}_1,\tilde{p}_1}=\frac{2\pi^2}{\pi^2-8}.
\end{equation}
See also \cite[Remark~2.1]{EdmundsLang2011}. 
In Theorem~\ref{fixingBBCDG} we show that $\mc{S}$ is indeed a Schauder basis of $L^r$ for $p=q\in(p_3,\frac65)$ where $p_3\approx 1.087063<\frac{12}{11}$, see \cite[Problem~1]{B2012}.  As $\frac65>\tilde{p}_1$, basisness is now guaranteed for all $p=q>p_3$. See Figure~\ref{impro_fig_p=q}.

In Section~\ref{nume} we report on our current knowledge of the different thresholds for invertibility of the change of coordinates map, both in the case of equal indices and otherwise. Based on the new criteria found in Section~\ref{criteria}, we formulate a general test of invertibility for $A$ which is amenable to analytical and numerical investigation. This test involves finding sharp bounds on the first few coefficients $a_k(p,q)$. See Proposition~\ref{beyond2}. For the case of equal indices, this test indicates that $\mc{S}$ is a Riesz basis of $L^2$ for $p=q>p_6$ where $p_6\approx 1.043917<p_2$.  

All the numerical quantities reported in this paper are accurate up to the last digit shown, which is rounded to the nearest integer. In the appendix we have included fully reproducible computer codes which can be employed to verify the calculations reported.


\section{Linear independence} \label{linearind}

A family $\{\tilde{s}_n\}_{n=1}^\infty$ in a Banach space is called $\omega$-linearly independent \cite[p.50]{Singer1970}, if 
\[\sum_{n=1}^{\infty} f_n \tilde{s}_n =0 \qquad \Rightarrow \qquad
f_n=0 \text{ for all } n.
\]   

\begin{thm} \label{likernelandspan}
For all $p,q>1$, the family $\mc{S}$ is $\omega$-linearly independent in $L^r$. Moreover, if the linear extension of the map $A:e_n\longmapsto s_n$ is a bounded operator $A:L^2\longrightarrow L^2$, then
\[
      \left(\overline{\on{Span} \mc{S} }\right)^\perp = \on{Ker} A^*.
\]
\end{thm}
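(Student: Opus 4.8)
The plan is to prove the two assertions separately; both reduce to elementary facts once the arithmetic structure of the Fourier coefficients recorded in the introduction is exploited, so I will keep the argument short.

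For the $\omega$-linear independence, suppose $\sum_{n=1}^\infty f_n s_n=0$ with convergence in the norm of $L^r$. Since $e_k\in L^\infty(0,1)\subseteq L^{r'}(0,1)$, the functional $g\longmapsto \widehat g(k)=\int_0^1 g\,e_k$ is continuous on $L^r$, so I may pass to sine-Fourier coefficients term by term. Using $\widehat{s_n}(k)=a_{k/n}$ when $n\mid k$ and $\widehat{s_n}(k)=0$ otherwise, the $k$-th coefficient of the series is the \emph{finite} sum $\sum_{n\mid k} f_n a_{k/n}$, which must therefore vanish for every $k\geq 1$. One now argues by strong induction on $k$: the identity $\sum_{n\mid k} f_n a_{k/n}=0$ isolates the term $f_k a_1$ from terms involving $f_n$ with $n$ a proper divisor of $k$; since $a_1=\sqrt2\int_0^1 \sin_{p,q}(\pi_{p,q}x)\sin(\pi x)\,\mr{d}x>0$ — the integrand being strictly positive on $(0,1)$ — the case $k=1$ gives $f_1=0$, and the inductive step gives $f_k a_1=0$, hence $f_k=0$. (Equivalently, the Dirichlet convolution of $n\mapsto f_n$ with $j\mapsto a_j$ is the zero arithmetic function, and $a_1\neq 0$ makes $j\mapsto a_j$ invertible for Dirichlet convolution.)

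For the second assertion, assume $A\in\mc{B}(L^2)$. Because $\{e_n\}$ is total in $L^2$ and $A$ is bounded, $\on{Span}\{e_n\}$ is dense in $L^2$ and so $A\bigl(\on{Span}\{e_n\}\bigr)=\on{Span}\mc{S}$ is dense in $\overline{\on{Range}A}$; conversely $\on{Span}\mc{S}\subseteq \on{Range}A$, so taking closures yields $\overline{\on{Range}A}=\overline{\on{Span}\mc{S}}$. The claim then follows from the standard Hilbert-space identity $\on{Ker}A^*=(\on{Range}A)^\perp=\bigl(\overline{\on{Range}A}\bigr)^\perp$.

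The only genuinely substantive ingredient is the non-vanishing (indeed positivity) of $a_1$; everything else is bookkeeping with the divisor lattice and the elementary functional analysis of adjoints. I anticipate no serious obstacle, the one point requiring mild care being the justification that $L^r$-convergence of the defining series permits term-by-term extraction of Fourier coefficients, which is immediate from the Hölder duality $L^r$–$L^{r'}$.
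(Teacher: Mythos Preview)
Your proof is correct. Both you and the paper reduce the first assertion to the arithmetic identity $\sum_{n\mid k} f_n\,a_{k/n}=0$ for every $k$, but you then diverge at the key step. You invoke the positivity of $a_1$ (which the paper also records, later, from the positivity of $\sin_{p,q}$ on $(0,\pi_{p,q})$) and run a single strong induction on $k$, or equivalently observe that the arithmetic function $j\mapsto a_j$ is a unit for Dirichlet convolution; this is short and transparent. The paper instead proves the stronger statement that $f=0$ follows merely from $A\neq 0$ (i.e.\ from \emph{some} $a_j$ being nonzero), via a double induction: assuming a first nonzero $f_l$, it shows recursively that every $a_k$ would have to vanish, a contradiction. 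So your argument is more elementary and better tailored to the concrete $p,q$-sine setting, while the paper's argument applies to any change of coordinates of the multiplicative form \eqref{decoA} that is not identically zero. Your treatment of the second assertion, via $\overline{\operatorname{Ran}A}=\overline{\operatorname{Span}\mathcal{S}}$ and the standard identity $\operatorname{Ker}A^*=(\operatorname{Ran}A)^\perp$, is essentially the same as the paper's.
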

\begin{proof}
For the first assertion we show that $\text{Ker}(A)=\{0\}$. Let $f=\sum_{k=1}^\infty f_k e_k$ be such that $Af=0$ where the series is convergent in the norm of $L^r$. Then
\[
      \sum_{j=1}^\infty \left(\sum_{mn=j} f_ma_n\right) e_j=\sum_{jk=1}^\infty f_ka_je_{jk}=0.
\]
Hence
\begin{equation} \label{kertequalzero}
 \sum_{mn=j} f_m a_n=0 \qquad \forall j\in \mb{N}.
\end{equation}
We show that all $f_j=0$ by means of a double induction argument.

Suppose that $f_1\not= 0$. We prove that all $a_k=0$. Indeed, clearly $a_1=0$ from \eqref{kertequalzero} with $j=1$. Now assume inductively that $a_j=0$ for all $j=1,\ldots,k-1$. From \eqref{kertequalzero} for $j=k$ we get
\[
       0=f_1a_k+\sum_{\substack{mn=k \\ m\not=1 \ n\not=k}}f_ma_n=f_1a_k.
\]
Then $a_k=0$ for all $k\in \mb{N}$. As this would contradict the fact that $A\not=0$, necessarily $f_1=0$.

Suppose now inductively that $f_1,\ldots,f_{l-1}= 0$ and $f_l\not=0$. We prove that again all $a_k=0$.  Firstly, $a_1=0$
from \eqref{kertequalzero} with $j=l$, because
\[
     0=f_la_1 +\sum_{\substack{mn=l \\ m\not=l \ n\not=1}}f_ma_n=f_l a_1.
\]
Secondly, assume by induction that $a_j=0$ for all $j=1,\ldots, k-1$. From \eqref{kertequalzero} for $j=lk$ 
we get
\[
       0=f_la_k + \sum_{\substack{mn=lk \\ m\not=l \ n\not=k}}f_ma_n=f_la_k.
\]
The latter equality is a consequence of the fact that, for $mn=lk$ with $m\not=l$ and $n\not=k$, either
$m<l$ (indices for the $f_m$) or $n<k$ (indices for the $a_n$). Hence $a_k=0$ for all $k\in \mb{N}$.  As this would again contradict the fact that  $A\not=0$, necessarily all $f_k=0$ so that $f=0$.

The second assertion is shown as follows. Assume that $A\in \mc{B}(L^2)$. If $f\in \on{Ker}
A^*$, then $\langle f,Ag\rangle=0$ for all $g\in L^2$, so $f\perp
\on{Ran}A$ which in turns means that $f\perp s_n$ for all $n\in
\mb{N}$. On the other hand, if the latter holds true for $f$, then
$f\perp A e_n$ for all $n\in \mb{N}$, so $A^*f=0$, as required.
\end{proof}

Therefore, $\mc{S}$ is a Riesz basis of $L^2$ if and only if $A\in \mc{B}(L^2)$ and $\on{Ran}A=L^2$. A simple example illustrates how a family of dilated periodic functions  can break its property of being a Riesz basis. 

\begin{exa} \label{ex1} Let $\alpha\in [0,1]$. Take
\begin{equation}   \label{littleexample}
    \tilde{s}(x)=\frac{1-\alpha}{\sqrt{2}}\sin(\pi x)+ \frac{\alpha}{\sqrt{2}}\sin(3 \pi x).
\end{equation}
By virtue of Lemma~\ref{toep} below, $\tilde{\mc{S}}=\{\tilde{s}(nx)\}_{n=1}^\infty$ is a Riesz basis
of $L^2$ if and only if $0\leq \alpha<\frac12$. For $\alpha=1$ we have
an orthonormal set. However it is not complete, as it clearly misses the infinite-dimensional subspace $\on{Span}\{e_{j}\}_{j\not \equiv_3 0}$.
\end{exa}


\section{The different components of the change of coordinates map}
\label{toep_s}

The fundamental decomposition of $A$ given in \eqref{decoA} allows us
to extract suitable components formed by Toeplitz operators of scalar type, \cite{RosenblumRovnyak1984}.    In order to identify these components, we begin by determining the Wold decomposition of the isometries $M_j$, \cite{RosenblumRovnyak1984,Nikolskii1986}.  See Remark~\ref{diri}.

\begin{lem} \label{multareshifts}
For all $j>1$, $M_j\in \mc{B}(L^2)$ is a shift of infinite multiplicity.
\end{lem}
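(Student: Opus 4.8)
The plan is to produce directly the Wold (von Neumann) decomposition of $M_j$, by exhibiting an explicit wandering subspace of infinite dimension whose $M_j$-orbit spans all of $L^2$. Recall that $\{e_m\}_{m=1}^\infty$ is an orthonormal basis of $L^2$ and that $M_j\in\mc{B}(L^2)$ is the isometry determined by $M_je_m=e_{jm}$. Since $M_j$ is a bounded isometry, for any index set $S\subseteq\mb{N}$ it maps $\overline{\on{Span}}\{e_m:m\in S\}$ isometrically onto $\overline{\on{Span}}\{e_{jm}:m\in S\}$, and iterating, $M_j^n$ maps it onto $\overline{\on{Span}}\{e_{j^nm}:m\in S\}$. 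A short computation also gives $M_j^*e_m=e_{m/j}$ when $j\mid m$ and $M_j^*e_m=0$ otherwise, so $\on{Ker}M_j^*=(\on{Ran}M_j)^\perp=\overline{\on{Span}}\{e_m:j\nmid m\}$.

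First I would set $\mc{L}=\on{Ker}M_j^*=\overline{\on{Span}}\{e_m:j\nmid m\}$, which is infinite dimensional because $\mb{N}\setminus j\mb{N}$ is an infinite set. By the observation above, $M_j^n\mc{L}=\overline{\on{Span}}\{e_{j^nm}:j\nmid m\}$ for every $n\ge0$.

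Second, I would record the elementary arithmetic fact that every $\ell\in\mb{N}$ factors uniquely as $\ell=j^nm$ with $n\ge0$ and $j\nmid m$: take $j^n$ to be the largest power of $j$ dividing $\ell$, and uniqueness follows since $j^nm=j^{n'}m'$ with $n<n'$ would force $j\mid m$. Hence the sets $S_n=\{j^nm:j\nmid m\}$, $n\ge0$, partition $\mb{N}$. Because distinct $S_n$ are disjoint subsets of the index set of an orthonormal basis, the subspaces $M_j^n\mc{L}=\overline{\on{Span}}\{e_\ell:\ell\in S_n\}$ are pairwise orthogonal, and the partition property gives $\bigoplus_{n=0}^\infty M_j^n\mc{L}=L^2$. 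Equivalently, $\mc{L}$ is a wandering subspace for $M_j$ and $\bigcap_{n\ge0}\on{Ran}M_j^n=\{0\}$, so $M_j$ has no unitary part. These two facts are precisely the statement that $M_j$ is a unilateral shift, of multiplicity $\dim\mc{L}=\infty$.

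I do not expect a serious obstacle; the only point that deserves a word of care is that $j$ need not be prime, so the exponent $n$ appearing in $\ell=j^nm$ is not a $j$-adic valuation in any multiplicative sense — nevertheless the unique factorisation above, and hence the partition of $\mb{N}$, remains valid and is all that is needed. As an alternative one could simply invoke the abstract Wold decomposition theorem and verify its two hypotheses: that $\on{Ker}M_j^*=\overline{\on{Span}}\{e_m:j\nmid m\}$ is infinite dimensional, and that $\bigcap_{n\ge0}\on{Ran}M_j^n=\{0\}$, the latter because any $f$ in this intersection has $\widehat f(\ell)=0$ whenever $j^n\nmid\ell$ for some $n$, hence for every $\ell\ge1$ since $j^n\to\infty$.
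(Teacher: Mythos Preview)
Your proof is correct and follows essentially the same route as the paper: both take $\mc{L}=\on{Ker}M_j^*=\overline{\on{Span}}\{e_m:j\nmid m\}$ as the wandering subspace and verify that $L^2=\bigoplus_{n\ge0}M_j^n\mc{L}$. Your version is in fact more detailed, since you spell out the arithmetic reason (the unique factorisation $\ell=j^nm$ with $j\nmid m$) behind the orthogonal decomposition that the paper simply asserts.
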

\begin{proof}
Define 
\begin{align*} 
\mc{L}_0^j& =\on{Span}\{e_k\}_{k\not\equiv_j 0}=\on{Ker}(M_j^*) \qquad \tx{and} \\
\mc{L}_n^j&= M_j^n\mc{L}_0^j \qquad \tx{for} \qquad n\in \mb{N}.
\end{align*}
Then $\mc{L}_n^j\cap \mc{L}_m^j=\{0\}$ for $m\not=n$, $L^2=\bigoplus_{n=0}^\infty \mc{L}_n^j$, and $M_j:\mc{L}_{n-1}^j\longrightarrow \mc{L}_{n}^j$ one-to-one and onto for all $n\in\mb{N}$.  Therefore indeed $M_j$ is a shift of multiplicity $\dim \mc{L}_0^j=\infty$.
\end{proof}

Let $\mb{D}=\{|z|< 1\}$.  The Hardy spaces of functions in $\mb{D}$ with values in the Banach space $\mc{C}$ are denoted below by $H^\gamma(\mb{D};\mc{C})$.  Let
\[
    \tilde{b}(z)=\sum_{k=0}^\infty b_k z^k
\]
be a holomorphic function on $\overline{\mb{D}}$ and fix $j\in \mb{N}\setminus \{1\}$. Let 
\[
\tilde{B}\in H^\infty(\mb{D};\mc{B}(\mc{L}_0^j)) \qquad \text{be given by} \qquad \tilde{B}(z)=\tilde{b}(z)I.
\]
Let  the corresponding Toeplitz operator \cite[(5-1)]{RosenblumRovnyak1984}
\[
      T(\tilde{B})\in \mc{B}(H^2(\mb{D};\mc{L}_0^j)) \qquad \text{be given by} \qquad T(\tilde{B}):f(z)\mapsto \tilde{B}(z)f(z).
\]
Let 
\begin{equation} \label{opeB} 
B= \sum_{k=0}^\infty b_k M_{j^k}:L^2 \longrightarrow L^2.
\end{equation}
By virtue of Lemma~\ref{multareshifts} (see  \cite[\S3.2 and \S5.2]{RosenblumRovnyak1984}), there exists an invertible isometry \[U:L^2\longrightarrow  H^2(\mb{D};\mc{L}_0^j)\] such that $UB=T(\tilde{B})U$. 
Below we write
\[
    \mf{M}(\tilde{b})=\max_{z\in \overline{\mb{D}}} |\tilde{b}(z)| \qquad \text{and} \qquad 
    \mf{m}(\tilde{b})=\min_{z\in \overline{\mb{D}}} |\tilde{b}(z)|.
\]

\begin{thm} \label{generic_toep}
$B$ in \eqref{opeB} is invertible if and only if $\mf{m}(\tilde{b})>0$.
Moreover
\[\|B\|= \mf{M}(\tilde{b})\qquad \text{and} \qquad \|B^{-1}\|=\mf{m}(\tilde{b}) ^{-1}.
\]
\end{thm}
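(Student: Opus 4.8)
The plan is to transport the problem from $L^2$ to the vector-valued Hardy space via the unitary $U$ constructed just above, so that $B$ becomes the multiplication (Toeplitz) operator $T(\tilde B)$ with symbol $\tilde B(z)=\tilde b(z)I$ on $H^2(\mb D;\mc L_0^j)$. Since $U$ is an invertible isometry, invertibility of $B$ is equivalent to invertibility of $T(\tilde B)$, and the operator norms of $B,B^{-1}$ equal those of $T(\tilde B),T(\tilde B)^{-1}$ respectively. So it suffices to prove the three claims for the analytic Toeplitz operator $T(\tilde B)$ whose symbol is a scalar-analytic multiple of the identity.

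First I would handle the norm of $T(\tilde B)$. For $f\in H^2(\mb D;\mc L_0^j)$ one has $\|T(\tilde B)f\|^2=\|\tilde b f\|^2\le \mf M(\tilde b)^2\|f\|^2$ by the maximum modulus of $\tilde b$ on $\overline{\mb D}$ (equivalently its $H^\infty$-norm), giving $\|T(\tilde B)\|\le\mf M(\tilde b)$; the reverse inequality follows by testing on functions $f$ of the form $g(z)\,c$ with $c\in\mc L_0^j$ a fixed unit vector and $g$ a scalar $H^2$-function concentrated near a point $z_0\in\partial\mb D$ where $|\tilde b|$ is maximal (a standard approximate-eigenvector / reproducing-kernel argument), so $\|T(\tilde B)\|=\mf M(\tilde b)$. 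Next, for invertibility: if $\mf m(\tilde b)>0$ then $\tilde b$ has no zeros in $\overline{\mb D}$, hence $1/\tilde b$ is holomorphic on $\overline{\mb D}$, and $T(\tilde B)^{-1}=T((1/\tilde b)I)$ because multiplication operators compose as multiplication of symbols; the norm computation just given then yields $\|T(\tilde B)^{-1}\|=\mf M(1/\tilde b)=\mf m(\tilde b)^{-1}$. Conversely, if $\mf m(\tilde b)=0$ then $\tilde b(z_0)=0$ for some $z_0\in\overline{\mb D}$; one shows $T(\tilde B)$ is not bounded below, either by exhibiting approximate null vectors localised near $z_0$ (if $z_0\in\partial\mb D$) or by noting that $(z-z_0)^{-1}\tilde b(z)$ is still analytic so the range omits the constant functions $c\in\mc L_0^j$ (if $z_0\in\mb D$, the point evaluation $f\mapsto f(z_0)$ annihilates the range), so $T(\tilde B)$ fails to be surjective. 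Either way $B$ is not invertible, completing the equivalence.

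The main obstacle is making the lower bound $\|T(\tilde B)\|\ge\mf M(\tilde b)$ and the non-invertibility direction fully rigorous through the boundary behaviour of $\tilde b$: since $\tilde b$ is holomorphic on $\overline{\mb D}$ the maximum of $|\tilde b|$ is attained on $\partial\mb D$, and one must produce $H^2$-functions whose mass concentrates near that boundary maximum while keeping the $\mc L_0^j$-valued structure trivial (scalar function times a fixed vector). This is routine for analytic Toeplitz operators but is the step requiring care; everything else is bookkeeping with the isometry $U$ and the multiplicative composition law $T(\tilde B_1)T(\tilde B_2)=T(\tilde B_1\tilde B_2)$ valid for analytic symbols. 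I would also remark that the entire argument is exactly the scalar Toeplitz theory of \cite{RosenblumRovnyak1984} applied coordinatewise, since the symbol is a scalar multiple of the identity, so one may alternatively just cite the scalar case and the direct-sum decomposition $H^2(\mb D;\mc L_0^j)\cong \bigoplus H^2(\mb D)$ over an orthonormal basis of $\mc L_0^j$.
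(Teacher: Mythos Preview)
Your proposal is correct and follows essentially the same route as the paper: transport via the unitary $U$ to the analytic Toeplitz operator $T(\tilde B)$ with scalar symbol, then invoke the standard scalar Toeplitz theory. The only cosmetic difference is that the paper cites \cite{RosenblumRovnyak1984} directly for $\|T(\tilde B)\|=\|\tilde B\|_{H^\infty}=\mf{M}(\tilde b)$ and uses the block lower-triangular matrix representation of $T(\tilde B)$ to reduce invertibility to the scalar case, which is exactly your closing remark about the direct-sum decomposition $H^2(\mb D;\mc L_0^j)\cong\bigoplus H^2(\mb D)$; your more explicit approximate-eigenvector and point-evaluation sketches simply unpack what those citations contain.
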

\begin{proof}
Observe that $T(\tilde{B})$ is scalar analytic in the sense of \cite[\S3.9]{RosenblumRovnyak1984}.
 Since $\tilde{b}$ is holomorphic in $\overline{\mb{D}}$, then $\mf{M}(\tilde{b})<\infty$ and
\[
     \|B\|=\|T(\tilde{B})\|=\|\tilde{B}\|_{H^\infty(\mb{D};\mc{B}(\mc{L}_0^j))}=\mf{M}(\tilde{b})
\]
\cite[\S4.7 Theorem~A(iii)]{RosenblumRovnyak1984}.

If $0\not\in \tilde{b}(\overline{\mb{D}})$, then $\tilde{b}(z)^{-1}$ is also holomorphic in $\overline{\mb{D}}$. The scalar Toeplitz operator $T(\tilde{b})$ is invertible if and only if
$\mf{m}(\tilde{b})>0$. Moreover, \cite[\S1.5]{BottcherSilvermann1998}, 
\[
      T(\tilde{b})^{-1}=T(\tilde{b}^{-1}) \in \mc{B}(H^2(\mb{D};\mb{C})).
\]
 The matrix of $T(\tilde{B})$ has the block representation \cite[\S5.9]{RosenblumRovnyak1984}
\[
    T(\tilde{B}) \sim \begin{bmatrix}  b_0 I & 0 & 0 &\cdots \\ \\
                                       b_1 I & b_0 I & 0 & \cdots \\ \\
                                        b_2 I & b_1 I & b_0I & \cdots \\ \\
                                        & & \cdots & &
  \end{bmatrix} \qquad \text{for}  \qquad I\in \mc{B} (\mc{L}_0^j).
\]
The matrix associated to $T(\tilde{b})$ has exactly the same scalar form, replacing $I$ by $1\in \mc{B}(\mb{C})$. Then, $T(\tilde{B})$ is invertible if and only if $T(\tilde{b})$ is invertible, and 
\[
T(\tilde{B})^{-1}  \sim \begin{bmatrix}  b_0^{(-1)} I & 0 & 0 &\cdots \\ \\
                                       b_1^{(-1)} I & b_0^{(-1)} I & 0 & \cdots \\ \\
                                        b_2^{(-1)} I & b_1^{(-1)} I & b_0^{(-1)}I & \cdots \\ \\
                                        & & \cdots & &
  \end{bmatrix} \qquad \text{for} \qquad  \tilde{b}(z)^{-1}=\sum_{k=0}^\infty b_k^{(-1)} z^k.
\]
Hence
\[
     \|B^{-1}\|=\|T(\tilde{B})^{-1}\|=\mf{M}(\tilde{b}^{-1})=\mf{m}(\tilde{b})^{-1}.
\]
\end{proof}

\begin{cor} \label{pert}
 Let $A=B+C$ for $B$ as in \eqref{opeB}. If $\|C\|<\mf{m}(\tilde{b})$, then $A$ is invertible. Moreover
\begin{equation} \label{gentrick} 
 \|A\|\leq \mf{M}(\tilde{b})+\|C\| \qquad \text{and} \qquad  \|A^{-1}\|\leq \frac{1}{\mf{m}(\tilde{b})-\|C\|}.
\end{equation}
\end{cor}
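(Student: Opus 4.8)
The plan is to reduce everything to the invertible operator $B$ supplied by Theorem~\ref{generic_toep} and then run a standard Neumann-series perturbation argument. First I would observe that the hypothesis $\|C\|<\mf{m}(\tilde{b})$ forces $\mf{m}(\tilde{b})>0$, so Theorem~\ref{generic_toep} applies: $B$ is invertible with $\|B\|=\mf{M}(\tilde{b})$ and $\|B^{-1}\|=\mf{m}(\tilde{b})^{-1}$. The bound on $\|A\|$ is then immediate from the triangle inequality, $\|A\|\le \|B\|+\|C\|=\mf{M}(\tilde{b})+\|C\|$, so the only real content is the invertibility of $A$ together with the estimate on $\|A^{-1}\|$.

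For that, I would factor $A=B(I+B^{-1}C)$ and estimate $\|B^{-1}C\|\le \|B^{-1}\|\,\|C\| = \|C\|/\mf{m}(\tilde{b})<1$. Hence $I+B^{-1}C$ is invertible by the Neumann series $\sum_{k\ge0}(-B^{-1}C)^k$, with
\[
\|(I+B^{-1}C)^{-1}\|\le \frac{1}{1-\|C\|/\mf{m}(\tilde{b})}.
\]
Therefore $A$ is invertible with $A^{-1}=(I+B^{-1}C)^{-1}B^{-1}$, and
\[
\|A^{-1}\|\le \|(I+B^{-1}C)^{-1}\|\,\|B^{-1}\|\le \frac{\mf{m}(\tilde{b})^{-1}}{1-\|C\|/\mf{m}(\tilde{b})}=\frac{1}{\mf{m}(\tilde{b})-\|C\|},
\]
which is exactly \eqref{gentrick}.

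There is essentially no serious obstacle here; the argument is the classical fact that the invertible operators form an open set and that the resolvent depends continuously on the operator. The only point requiring a line of care is checking that the hypothesis genuinely delivers $\mf{m}(\tilde{b})>0$ so that Theorem~\ref{generic_toep} can be invoked — but this is trivial since norms are nonnegative and $\|C\|<\mf{m}(\tilde{b})$ cannot hold if $\mf{m}(\tilde{b})=0$. One could alternatively factor on the right as $A=(I+CB^{-1})B$; either order gives the same constants, and I would simply pick the left factorisation as above.
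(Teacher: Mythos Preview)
Your argument is correct and essentially identical to the paper's: the paper factors $A=(I+CB^{-1})B$ and applies the Neumann bound $\|(I+CB^{-1})^{-1}\|\le (1-\|C\|\|B^{-1}\|)^{-1}$, which is just your right-factorisation variant. The only difference is cosmetic (left versus right factorisation), and you already note this equivalence yourself.
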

\begin{proof}
Since $B$ is invertible,  write $A=(I+CB^{-1})B$. If additionally $\|CB^{-1}\|<1$, then
\[
    \| (I+CB^{-1})^{-1} \| \leq \frac{1}{1-\|C\| \|B^{-1}\|}.
\]
\end{proof}

\begin{rem} \label{diri}
It is possible to characterise the change of coordinates $A$ in terms of Dirichlet series, and recover some of the results here and below directly from this characterisation. See for example the insightful paper \cite{HLS1997} and the complete list of references provided in the addendum \cite{HLS1999}. However, the full technology of Dirichlet series is not needed in the present context. A further development in this direction will be reported elsewhere.
\end{rem}


\section{Invertibility and bounds on the Riesz constant} \label{criteria}

A proof of \eqref{trick1} can be achieved by applying Corollary~\ref{pert} assuming that \[B=a_1M_1=a_1I.\]
Our next goal is to formulate concrete sufficient condition for the invertibility of $A$ and corresponding bounds on $r(\mc{S})$, which improve upon \eqref{trick1} whenever $r=2$. For this purpose we apply Corollary~\ref{pert} assuming that $B$ has now the three-term expansion
\[B=a_1M_1+a_3 M_3+a_9 M_9.\]

Let 
\[
\sf{T}=\{ \beta<1,\, \beta-\alpha+1>0,\, \beta+\alpha+1>0  \}. 
\]
Let
\begin{align*}
   \sf{R}_1&=\left\{\left|\alpha(\beta+1)\right|<|4\beta|\right\}\cap\{\beta>0\}\\
\sf{R}_3&=\left\{\left|\alpha(\beta+1)\right|< |4\beta|\right\} \cap\{\beta<0\}\\
\sf{R}_2&=\left\{\left|\alpha(\beta+1)\right|\geq |4\beta|\right\}=\mb{R}^2\setminus \left(\sf{R}_1\cup \sf{R}_3\right).
\end{align*}
See Figure~\ref{region}.

\begin{figure}[t]
\centerline{
\includegraphics[height=10cm, angle=0]{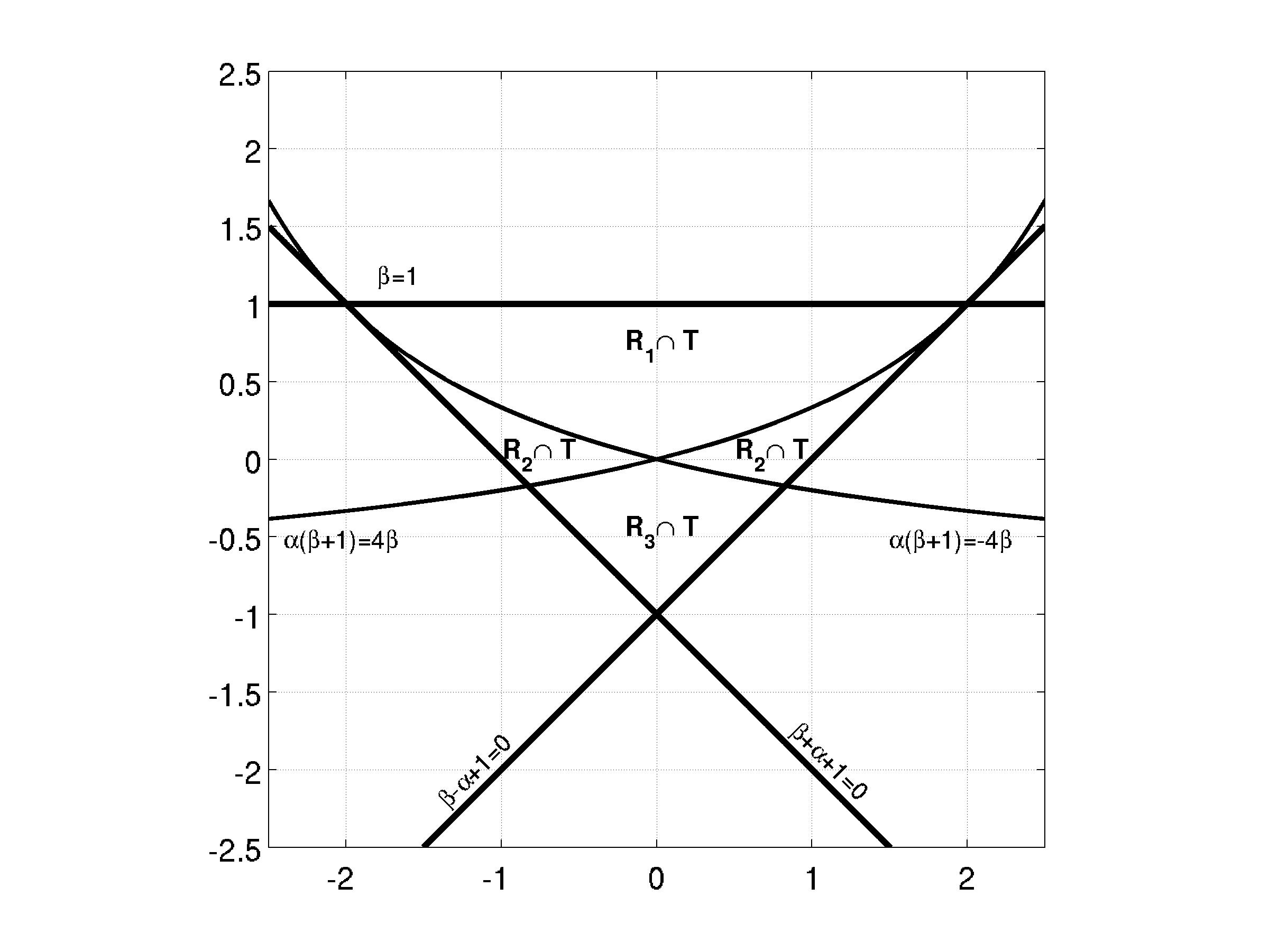}}
\caption{\label{region}Optimal region of invertibility in Lemma~\ref{toep}. In this picture the horizontal axis is $\alpha$ and the vertical axis is $\beta$.}
\end{figure}

\begin{lem} \label{toep}
Let $r=2$. Let $\alpha,\beta\in \mb{R}$. The operator $B=I+\alpha M_3+\beta M_9$ is invertible if and only if $(\alpha,\beta)\in \sf{T}$. Moreover
\begin{equation*}
 \begin{bmatrix}
\|B\| \\ \|B^{-1}\|^{-1} \end{bmatrix}=\left\{\begin{array}{ll}
   \begin{bmatrix}{1+\beta+|\alpha|}\\ {(1-\beta)\sqrt{1-\frac{\alpha^2}{4\beta}}} \end{bmatrix} 
   & \quad  (\alpha,\beta)\in\sf{R}_1\cap \sf{T} \\ \\
 \begin{bmatrix}{1+\beta+|\alpha|} \\  {1+\beta-|\alpha|} \end{bmatrix} 
 & \quad (\alpha,\beta)\in\sf{R}_2\cap \sf{T} \\ \\
\begin{bmatrix} {(1-\beta)\sqrt{\frac{\alpha^2}{4\beta}-1}} \\ {1+\beta-|\alpha|} \end{bmatrix} 
& \quad (\alpha,\beta)\in\sf{R}_3 \cap \sf{T}
\end{array} \right. 
\end{equation*}
\end{lem}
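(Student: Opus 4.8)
The plan is to apply Theorem~\ref{generic_toep} with $j=3$, recognising that $B=I+\alpha M_3+\beta M_9 = M_{3^0}+\alpha M_{3^1}+\beta M_{3^2}$ is exactly of the form \eqref{opeB} with symbol
\[
\tilde{b}(z)=1+\alpha z+\beta z^2.
\]
By Theorem~\ref{generic_toep}, $B$ is invertible if and only if $\mf{m}(\tilde{b})>0$, i.e.\ iff the quadratic $1+\alpha z+\beta z^2$ has no zeros in $\overline{\mb{D}}$, and in that case $\|B\|=\mf{M}(\tilde{b})$, $\|B^{-1}\|^{-1}=\mf{m}(\tilde{b})$. So the whole lemma reduces to a concrete extremal problem: locate the roots of a real quadratic relative to the closed unit disc, and minimise/maximise $|\tilde{b}(z)|$ over $\overline{\mb{D}}$.

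**The root condition.** First I would show that ``$\tilde{b}$ has no zeros in $\overline{\mb{D}}$'' is equivalent to $(\alpha,\beta)\in\sf{T}$. Write the roots as $z_1,z_2$ with $z_1z_2=1/\beta$ and $z_1+z_2=-\alpha/\beta$ (the case $\beta=0$ handled separately: then $\tilde b(z)=1+\alpha z$ has its single zero $-1/\alpha$ outside $\overline{\mb D}$ iff $|\alpha|<1$, consistent with $\sf T$). Since the coefficients are real, either both roots are real or they form a complex-conjugate pair. A clean way is to note $\tilde b$ has a zero in $\overline{\mb D}$ iff $\min_{|z|=1}|\tilde b(z)|\le$ (value forced by the maximum principle / argument principle); more concretely, because $\tilde b(0)=1\ne 0$, $\tilde b$ is zero-free on $\overline{\mb D}$ iff it is zero-free on the boundary circle together with a winding-number-zero condition, but for a quadratic it is cleanest to evaluate $\tilde b(1)=1+\alpha+\beta$ and $\tilde b(-1)=1-\alpha+\beta$ and track sign changes. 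The three defining inequalities of $\sf{T}$ are precisely $\beta<1$ (equivalently $|z_1 z_2|=|1/\beta|>1$ when $\beta>0$, giving the ``both roots outside'' geometry), $\tilde b(1)>0$, and $\tilde b(-1)>0$. I would verify that these three together are necessary and sufficient: necessity by evaluating at $z=\pm1$ and at $z\to$ the relevant limit; sufficiency by checking that under $\sf T$ neither a real root nor a conjugate pair can enter $\overline{\mb D}$ (real case: a real root in $[-1,1]$ forces $\tilde b(1)$ and $\tilde b(-1)$ to have opposite signs or $\beta\ge 1$; conjugate case: $|z_1|^2=|z_1z_2|=1/|\beta|$, so both on/inside the disc needs $|\beta|\ge 1$, contradicting $\beta<1$ once one also rules out $\beta\le-1$ via the $\tilde b(\pm1)>0$ inequalities).

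**The norm computation.** The core work is computing $\mf M(\tilde b)$ and $\mf m(\tilde b)$ explicitly on $\sf T$. By the maximum modulus principle both extrema of $|\tilde b|$ over $\overline{\mb D}$ are attained on $|z|=1$ (for $\mf m$ this needs $\tilde b$ zero-free, which holds on $\sf T$). Parametrise $z=e^{i\theta}$ and set $t=\cos\theta\in[-1,1]$; then
\[
|\tilde b(e^{i\theta})|^2 = (1+\beta)^2 + \alpha^2 + 2\alpha(1+\beta)\cos\theta + 2\beta(\cos 2\theta - 1)\cdot(\text{collected terms}),
\]
which after expansion becomes a quadratic in $t$, say $\varphi(t)=4\beta t^2 + 2\alpha(1+\beta)t + (1-\beta)^2 + \alpha^2 - (\text{I would compute the exact constant})$. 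Wait — I would carefully expand $|1+\alpha e^{i\theta}+\beta e^{2i\theta}|^2$ and collect in powers of $\cos\theta$, obtaining $\varphi(t)=4\beta t^2 + 2\alpha(1+\beta)t + (1-\beta)^2+\alpha^2-2\beta$ (exact constant to be confirmed in the write-up). The extrema of $\varphi$ on $[-1,1]$ occur at the endpoints $t=\pm1$, where $\varphi(\pm1)=(1\pm\alpha+\beta)^2$ giving $|\tilde b|=|1+\beta\pm|\alpha||$, or at the interior critical point $t^*=-\alpha(1+\beta)/(4\beta)$, which lies in $(-1,1)$ precisely when $|\alpha(1+\beta)|<|4\beta|$, i.e.\ in $\sf R_1\cup\sf R_3$; there $\varphi(t^*)=(1-\beta)^2(1-\tfrac{\alpha^2}{4\beta})$ (again the precise algebra to be checked). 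The sign of the leading coefficient $4\beta$ decides whether the interior critical point is a min ($\beta>0$, region $\sf R_1$) or a max ($\beta<0$, region $\sf R_3$); in $\sf R_2$ the extrema are both at endpoints. Matching $\mf M$ to the larger and $\mf m$ to the smaller value in each of the three cases, and using $\|B\|=\mf M$, $\|B^{-1}\|^{-1}=\mf m$ from Theorem~\ref{generic_toep}, yields exactly the three-line formula in the statement.

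**Main obstacle.** The conceptual content is light — everything rides on Theorem~\ref{generic_toep} — so the only real work is bookkeeping: (i) getting the exact expansion of $|\tilde b(e^{i\theta})|^2$ and the resulting quadratic $\varphi(t)$ right, and (ii) the case analysis distinguishing when the interior critical point of $\varphi$ falls inside $[-1,1]$ and whether it is a maximum or a minimum, which is exactly the partition $\sf R_1,\sf R_2,\sf R_3$ together with $\{\beta\gtrless 0\}$. Care is also needed at the boundary cases $\beta=0$ and $|\alpha(1+\beta)|=|4\beta|$ (the critical point sitting at an endpoint), where the three formulas must agree — a useful consistency check on the algebra. I expect the sign analysis on the overlap $\sf R_i\cap\sf T$ (ensuring $1+\beta-|\alpha|>0$ there, so that $\mf m$ is genuinely positive and the formulas are the stated positive quantities) to be the fiddliest part.
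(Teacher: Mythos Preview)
Your approach is correct and essentially identical to the paper's: both reduce $B$ to the scalar Toeplitz symbol $\tilde b(z)=1+\alpha z+\beta z^2$ via Theorem~\ref{generic_toep}, determine invertibility by locating the roots of $\tilde b$ relative to $\overline{\mb D}$, and compute $\mf M(\tilde b)$, $\mf m(\tilde b)$ by analysing $|\tilde b(e^{i\theta})|^2$ on the circle. The only cosmetic differences are that the paper differentiates $b(\theta)=|\tilde b(e^{i\theta})|^2$ directly in $\theta$ (finding $b'(\theta)=0$ iff $\sin\theta=0$ or $\cos\theta=-\alpha(\beta+1)/(4\beta)$) rather than substituting $t=\cos\theta$, and for the root analysis it splits on the sign of the discriminant $\alpha^2-4\beta$ rather than using $\tilde b(\pm1)$ and Vieta; for the record, the constant term in your quadratic is $(1-\beta)^2+\alpha^2$, giving exactly $\varphi(t^*)=(1-\beta)^2(1-\alpha^2/(4\beta))$ as you anticipated.
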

\begin{proof}
Let $\tilde{b}(z)=1+\alpha z + \beta z^2$ be associated with $B$ as in
Section~\ref{toep_s}.

The first assertion is a consequence of the following observation. If
$\alpha^2-4\beta< 0$, then $\tilde{b}(z)$ has roots $z_\pm$ conjugate
with each other and $|z_\pm|\leq 1$ if and only if $\beta\geq 1$.
Otherwise $\tilde{b}(z)$ has two real roots. If $\alpha^2-4\beta\geq
0$ and $\alpha\geq 0$, then the smallest in modulus root of
$\tilde{b}(z)$ would lie in $\overline{\mb{D}}$ if and only if
$\beta-\alpha+1\leq 0$. If $\alpha^2-4\beta\geq 0$ and $\alpha<0$,
then the root of $\tilde{b}(z)$ that is smallest in modulus would lie
in $\overline{\mb{D}}$ if and only if $\beta+\alpha+1\leq 0$.

For the second assertion, let $(\alpha,\beta)\in \sf{T}$ and $b(\theta)=|\tilde{b}(e^{i\theta})|^2$. By virtue of the Maximum Principle on $\tilde{b}(z)$ and $\frac{1}{\tilde{b}(z)}$,
\[
      \mf{M}(\tilde{b})^2=\max_{-\pi \leq \theta <\pi} b(\theta) \qquad \text{and} \qquad  \mf{m}(\tilde{b})^2=\min_{-\pi \leq \theta <\pi} b(\theta) .
\]  
Since
\begin{align*}
    b(\theta)  &=(1+\alpha \cos(\theta)+\beta \cos(2\theta))^2+(\alpha \sin(\theta)+\beta \sin(2\theta))^2 \\
    &=1+\alpha^2+\beta^2+2(\beta+1)\alpha \cos(\theta)+2 \beta \cos(2\theta),
\end{align*}
then
$b'(\theta)=0$ if and only if $(\alpha(\beta+1)+4\beta \cos(\theta))\sin(\theta)=0$. For $\sin(\theta_0)=0$, we get
$b(\theta_0)=(1+\beta+\alpha)^2$ and $b(\theta_0)=(1+\beta-\alpha)^2$. For $\cos(\theta_0)=-\frac{\alpha(\beta+1)}{4\beta}$, we get
$b(\theta_0)=(1-\frac{\alpha^2}{4\beta})(\beta-1)^2$ with the condition $\left|\frac{\alpha(\beta+1)}{4\beta}\right|\leq 1$. 
By virtue of Theorem~\ref{generic_toep}, 
we obtain the claimed statement. \end{proof}

Since $\sin_{p,q}(x)>0$ for all $x\in (0,\pi_{p,q})$, then $a_1>0$. Below we substitute $\alpha= \frac{a_3}{a_1}$ and $\beta=\frac{a_9}{a_1}$, then  apply Lemma~\ref{toep} appropriately in order to determine the invertibility of $A$ whenever pairs $(p,q)$  lie in different regions of the $(p,q)$-plane. For this purpose we establish the following hierarchy between $a_1$ and $a_j$ for $j=3,9$, whenever the latter are non-negative.

\begin{lem} \label{ajint}
For $j=3$ or $j=9$, we have $a_j<a_1$.
\end{lem}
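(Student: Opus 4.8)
The plan is to work directly with the integral representation
\[
a_j = \sqrt{2}\int_0^1 \sin_{p,q}(\pi_{p,q}x)\sin(j\pi x)\,\mr{d}x,
\]
and to compare $a_j$ with $a_1$ by expanding $\sin(j\pi x)$ in a way that reuses the $j=1$ integrand. First I would note that $a_1>0$ (already observed above, since $\sin_{p,q}>0$ on $(0,\pi_{p,q})$ and $\sin(\pi x)>0$ on $(0,1)$), so the inequality $a_j<a_1$ is the substantive claim. The cleanest route is the pointwise bound: one always has $|\sin(j\pi x)|\le 1$, but that alone is too weak since it would only give $|a_j|\le\sqrt2\int_0^1\sin_{p,q}(\pi_{p,q}x)\,\mr dx$, and one must compare this with $a_1$. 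So instead I would seek an estimate of the form $|\sin(j\pi x)|\le c_j\,\sin(\pi x)$ on $(0,1)$ — this fails for $c_j$ bounded, since $\sin(\pi x)\to 0$ at the endpoints while $\sin(j\pi x)$ does not. Hence the pointwise approach needs to be localized near the endpoints only.

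The better strategy is a change of variables and symmetry. Using the substitution $x\mapsto 1-x$ together with the evenness of $\sin_{p,q}$ about $\pi_{p,q}/2$, one sees the integrand $\sin_{p,q}(\pi_{p,q}x)$ is symmetric about $x=1/2$; and $\sin(j\pi x)$ for odd $j$ is also symmetric about $x=1/2$ (for even $j$ the coefficient vanishes, as noted, so we may assume $j$ odd). Thus $a_j = 2\sqrt2\int_0^{1/2}\sin_{p,q}(\pi_{p,q}x)\sin(j\pi x)\,\mr dx$. On $(0,1/2)$ we now want $\sin(j\pi x) < $ (something controlled). For $j=3$: $\sin(3\pi x) = \sin(\pi x)\bigl(3-4\sin^2(\pi x)\bigr) = \sin(\pi x)(4\cos^2(\pi x)-1)$, so on $(0,1/2)$ we have $\sin(3\pi x)\le 3\sin(\pi x)$ with equality only at $x=0$; this still has the wrong constant. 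The key extra ingredient must therefore be a bound comparing $\int\sin_{p,q}\sin(j\pi x)$ against $\int\sin_{p,q}\sin(\pi x)$ that exploits that $\sin_{p,q}(\pi_{p,q}x)$ is itself large and concentrated near $x=1/2$, where $\sin(\pi x)$ is maximal but $\sin(3\pi x)$ and $\sin(9\pi x)$ are small (indeed negative). Concretely I would split $\int_0^{1/2}$ at a point where $\sin(j\pi x)$ changes sign, bound the positive part crudely using $\sin_{p,q}(\pi_{p,q}x)\le \sin_{p,q}(\pi_{p,q}/2)=1$ and the negative part using $\sin_{p,q}(\pi_{p,q}x)\ge$ its value at the sign-change point, and check that the resulting elementary trigonometric integrals satisfy the required inequality. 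Since $a_1 = 2\sqrt2\int_0^{1/2}\sin_{p,q}(\pi_{p,q}x)\sin(\pi x)\,\mr dx \ge 2\sqrt2\,\mf{c}\int_0^{1/2}\sin(\pi x)\,\mr dx$ for a lower bound $\mf c$ on $\sin_{p,q}(\pi_{p,q}x)$ over a central subinterval, one reduces everything to explicit numbers.

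Alternatively — and this may be the shortest rigorous path — one can use the Fourier-series identity already developed in the introduction: since $s_1(x)=\sin_{p,q}(\pi_{p,q}x)=\sum_k a_k e_k(x)$ in $L^2$, Parseval gives $\sum_k a_k^2 = \|s_1\|_{L^2}^2 \le 1$ because $|\sin_{p,q}|\le 1$ pointwise, and in fact $\|s_1\|_{L^2}^2<1$ strictly since $|\sin_{p,q}(\pi_{p,q}x)|<1$ on a set of positive measure. Hence $a_j^2 < a_1^2 + \sum_{k\ne 1} a_k^2 \le 1$ — but this only gives $a_j<1$, not $a_j<a_1$. To upgrade it one needs $a_1$ to be comparable to $1$: a good lower bound is $a_1 = \sqrt2\int_0^1 \sin_{p,q}(\pi_{p,q}x)\sin(\pi x)\,\mr dx$, and since $\sin_{p,q}(\pi_{p,q}x)\ge \sin(\pi x)$ on $(0,1)$ for all $p,q>1$ (a known comparison: $F_{p,q}(y)\ge \int_0^y(1-x^2)^{-1/2}\mr dx$ when $q\ge2,p\ge2$, with the appropriate variant otherwise, hence $\sin_{p,q}$ dominates $\sin$ after rescaling), we get $a_1 \ge \sqrt2\int_0^1\sin^2(\pi x)\,\mr dx = \tfrac{\sqrt2}{2}$, so $a_1^2\ge 1/2$, and then $a_j^2 \le \|s_1\|_{L^2}^2 - a_1^2 + a_1^2$ needs the finer information $a_j^2 \le \|s_1\|_{L^2}^2 - a_1^2$... which fails if $j$ is such that $a_j$ is the dominant tail term. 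The honest obstacle is thus the same in both approaches: one must genuinely quantify that $\sin_{p,q}(\pi_{p,q}x)$ resembles $\sin(\pi x)$ closely enough that its inner product with $\sin(j\pi x)$ for $j=3,9$ is strictly smaller. The hard part is making the constant come out on the right side of $1$ uniformly in $p,q>1$; I expect one must use the monotonicity of the relevant quantities in $p$ and $q$ (it suffices to check the worst case, which is the limit $p,q\to\infty$ where $\sin_{p,q}\to$ the sawtooth/triangle wave, or $p,q\to1$ where it degenerates) and reduce to a finite check. I would organize the final write-up around the pointwise comparison $\sin_{p,q}(\pi_{p,q}x)\ge\sin(\pi x)$ on $[0,1]$ combined with the exact values of $\int_0^1\sin(\pi x)\sin(j\pi x)\,\mr dx$-type auxiliary integrals, and present the uniform-in-$(p,q)$ reduction as the main lemma-internal step.
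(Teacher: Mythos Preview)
Your proposal never lands on an argument, and the approaches you try all have a real defect. The claimed pointwise comparison $\sin_{p,q}(\pi_{p,q}x)\ge\sin(\pi x)$ on $(0,1)$ is \emph{not} true for all $p,q>1$ (the paper itself only uses it for $(p,q)\in(1,2)^2$), so the ``shortest rigorous path'' you outline is blocked. The Parseval route, as you note yourself, gives only $a_j<1$, not $a_j<a_1$. And the plan to reduce to a ``worst case'' in $(p,q)$ would require proving monotonicity of $a_1-a_j$ in both parameters, which is neither obvious nor needed.

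What you are missing is that the result is \emph{soft}: it follows from monotonicity of $\sin_{p,q}(\pi_{p,q}x)$ alone, with no numerical input and no dependence on $(p,q)$. The paper works directly with
\[
a_1-a_j = 2\sqrt{2}\int_0^{1/2}\sin_{p,q}(\pi_{p,q}x)\,[\sin(\pi x)-\sin(j\pi x)]\,\mr{d}x.
\]
For $j=3$ one writes $\sin(\pi x)-\sin(3\pi x)=-2\sin(\pi x)\cos(2\pi x)$; this is negative on $(0,\tfrac14)$ and positive on $(\tfrac14,\tfrac12)$, and $\cos(2\pi x)$ is odd about $\tfrac14$. Since both $\sin_{p,q}(\pi_{p,q}x)$ and $\sin(\pi x)$ are positive and strictly increasing on $(0,\tfrac12)$, the weight against which the odd factor is integrated is larger on the positive half than on the negative half, so the two contributions don't cancel and the total is strictly positive. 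For $j=9$ the paper locates the zeros of $\sin(\pi x)-\sin(9\pi x)$ on $[0,\tfrac12]$ and pairs each negative interval with a translated positive interval on which the (increasing) weight is strictly larger; the leftover positive pieces then force $a_1-a_9>0$. You gestured at ``splitting at a sign change'', which is the right instinct, but you then reached for numerical bounds on $\sin_{p,q}$ instead of the monotonicity/rearrangement idea that makes the computation unnecessary.
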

\begin{proof}
Firstly observe that $\sin_{p,q}(\pi_{p,q}x)$ is continuous, it increases for all $x\in(0,\frac12)$ and it vanishes at $x=0$.  

Let $j=3$. Set
\[
\begin{gathered}
        I_0=\int_0^{\frac14} \sin_{p,q}(\pi_{p,q}x) [\sin(\pi x)-\sin(3\pi x)] \mr{d}x \qquad \text{and} \\
        I_1=\int_{\frac14}^{\frac12} \sin_{p,q}(\pi_{p,q}x) [\sin(\pi x)-\sin(3\pi x)] \mr{d}x .
\end{gathered}
\]
Since
\[
      \sin(\pi x)-\sin(3\pi x)= -2\sin(\pi x) \cos(2\pi x),
\]
then $I_0<0$ and $I_1>0$. As $\cos(2\pi x)$ is odd with respect to $\frac14$ and $\sin(\pi x)$ is increasing in the segment $(0,\frac12)$, then also $|I_0|<|I_1|$.
Hence  \[a_1-a_3=2\sqrt{2}(I_0+I_1)>0,\] ensuring the first statement of the lemma.

Let $j=9$. A straightforward calculation shows that $\sin(\pi x)=\sin(9\pi x)$ if and only if, 
either $\sin(4\pi x)=0$ or $\cos(4\pi x) \cos(\pi x)= \sin(4\pi x) \sin (\pi x)$.   
Thus, $\sin(\pi x)-\sin(9\pi x)$ has exactly five zeros in the segment $[0,\frac12]$ located at:
\[
     x_0=0,\,x_1=\frac{1}{10},\, x_4=\frac{1}{4},\, x_5=\frac{3}{10} \text{ and } x_8=\frac12.
\]

Set
\[
       x_2=\frac19,\, x_3=\frac{19}{90},\,x_6= \frac{13}{36}\text{ and }x_7= \frac{37}{90},
\]
and
\[
     I_k=\int_{x_k}^{x_{k+1}} \sin_{p,q}(\pi_{p,q}x) [\sin(\pi x)-\sin(9\pi x)] \mr{d}x .
\]
Then $I_k<0$ for $k=0,4$ and $I_k>0$ for $k=1,2,3,5,6,7$. Since 
\[
       \sin(9\pi x) - \sin(\pi x)< \sin\left(\pi \left(x+\frac19\right)\right) - \sin\left(9\pi\left( x+\frac19\right)\right)
\]
for all $x\in (0,\frac12)$, then
\[
      |I_0|<|I_2| \quad \text{and} \quad |I_4|<|I_6|.
\]
Hence
\[
      a_1-a_9=2\sqrt{2}\sum_{k=0}^7I_k >2\sqrt{2}(I_1+I_3+I_5+I_7)>0.
\]
\end{proof}

The next two corollaries are consequences of Corollary~\ref{pert} and Lemma~\ref{toep}, and are among the main results of this paper.

\begin{cor} \label{main_1}
\begin{equation} \label{trick2}
      \left.\begin{aligned}
    & \left(\frac{a_3}{a_1},\frac{a_9}{a_1}   \right)\in \sf{R}_2\cap \sf{T} \\ &\sum_{j\not\in\{1,9\}}^\infty |a_j| <a_1+a_9 
      \end{aligned}   \right\}\quad \Rightarrow \quad \left\{\begin{aligned} &A,A^{-1}\in \mc{B}(L^2) \\
      & r(\mc{S}) \leq \frac{\sum_{j=1}^\infty |a_j|}{a_1+a_9-\sum_{j\not\in\{1,9\}}^\infty |a_j|}. \end{aligned} \right.
\end{equation}
\end{cor}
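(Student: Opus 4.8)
The plan is to deduce Corollary~\ref{main_1} directly from Corollary~\ref{pert} and Lemma~\ref{toep} by a suitable rescaling of the decomposition \eqref{decoA}. First I would split the series \eqref{decoA} as
\[
      A = a_1 M_1 + a_3 M_3 + a_9 M_9 + \sum_{j\not\in\{1,3,9\}} a_j M_j =: \widetilde{B} + \widetilde{C},
\]
where $\widetilde{B}=a_1 M_1+a_3 M_3+a_9 M_9$ collects the three distinguished terms (note $M_9=M_{3^2}$, so $\widetilde{B}$ is exactly of the form \eqref{opeB} with $j=3$, $b_0=a_1$, $b_1=a_3$, $b_2=a_9$ and $b_k=0$ for $k\geq 3$) and $\widetilde{C}$ is the remainder. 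Since $a_1>0$, I would factor $\widetilde{B}=a_1 B$ with $B=I+\alpha M_3+\beta M_9$, $\alpha=a_3/a_1$, $\beta=a_9/a_1$, so that $A = a_1(B + C)$ with $C = a_1^{-1}\widetilde{C}$.

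Next I would verify the hypotheses of Corollary~\ref{pert} for the pair $(B,C)$. The assumption $(\alpha,\beta)\in\sf{R}_2\cap\sf{T}$ puts us in the middle branch of Lemma~\ref{toep}, whence $B$ is invertible with $\mf{m}(\tilde b)=\|B^{-1}\|^{-1}=1+\beta-|\alpha|$ and $\mf{M}(\tilde b)=\|B\|=1+\beta+|\alpha|$. For the norm of $C$ I would use that the $M_j$ are isometries (and, by the triangle inequality, $\|\sum_j c_j M_j\|\le\sum_j|c_j|$), so
\[
      \|C\| \;\le\; \frac{1}{a_1}\sum_{j\not\in\{1,3,9\}}|a_j|
      \;=\; \frac{1}{a_1}\Bigl(\sum_{j\not\in\{1,9\}}|a_j| - |a_3|\Bigr).
\]
The second hypothesis $\sum_{j\not\in\{1,9\}}|a_j|<a_1+a_9$ then gives $\sum_{j\not\in\{1,3,9\}}|a_j| < a_1+a_9-|a_3| = a_1+a_9-a_1|\alpha|$ (using $a_3=a_1\alpha$, whose sign is irrelevant inside $|\cdot|$), i.e. $\|C\| < 1+\beta-|\alpha| = \mf{m}(\tilde b)$. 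Hence Corollary~\ref{pert} applies: $B+C$, and therefore $A$, is invertible in $\mc{B}(L^2)$.

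Finally I would assemble the Riesz-constant bound. By \eqref{gentrick},
\[
      \|A\| = a_1\|B+C\| \le a_1\bigl(\mf{M}(\tilde b)+\|C\|\bigr)
      \le a_1(1+\beta+|\alpha|) + \sum_{j\not\in\{1,3,9\}}|a_j|
      = (a_1+a_9+|a_3|) + \sum_{j\not\in\{1,3,9\}}|a_j| = \sum_{j=1}^\infty|a_j|,
\]
while
\[
      \|A^{-1}\| = a_1^{-1}\|(B+C)^{-1}\| \le \frac{a_1^{-1}}{\mf{m}(\tilde b)-\|C\|}
      = \frac{1}{a_1(1+\beta-|\alpha|)-a_1\|C\|}
      \le \frac{1}{(a_1+a_9-|a_3|) - \sum_{j\not\in\{1,3,9\}}|a_j|}
      = \frac{1}{a_1+a_9-\sum_{j\not\in\{1,9\}}|a_j|}.
\]
Multiplying the two estimates yields $r(\mc{S})=\|A\|\|A^{-1\|}$ bounded by the stated ratio. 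No step presents a genuine obstacle; the only point requiring a little care is bookkeeping the sign conventions so that the $|a_3|$ term (equivalently $a_1|\alpha|$) cancels correctly between the $\|C\|$ bound and the $\mf{m}(\tilde b)$, $\mf{M}(\tilde b)$ formulas — one should not assume $a_3\ge 0$ here (that positivity is only exploited in the later corollaries via Lemma~\ref{ajint}), but since everything enters through absolute values the argument goes through for either sign of $\alpha$.
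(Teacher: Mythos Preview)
Your proof is correct and follows essentially the same route as the paper's: split $A=B+C$ with $B=a_1I+a_3M_3+a_9M_9$ and $C=\sum_{j\notin\{1,3,9\}}a_jM_j$, invoke Lemma~\ref{toep} (via Corollary~\ref{pert}) on the $\sf{R}_2\cap\sf{T}$ branch to get $\|B^{-1}\|^{-1}=a_1+a_9-|a_3|$, and compare with $\|C\|\le\sum_{j\notin\{1,3,9\}}|a_j|$. The paper differs only cosmetically: it does not bother rescaling by $a_1$, and for the numerator it bounds $\|A\|\le\sum_{j}|a_j|$ directly by the triangle inequality rather than routing through $\mf{M}(\tilde b)$. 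That shortcut would also spare you the final ``$=$'' in your $\|A\|$ chain, which is a genuine equality only when $a_9\ge 0$; for $a_9<0$ one has $a_1+a_9+|a_3|<a_1+|a_9|+|a_3|$, so the step should read ``$\le$'', though the desired bound is unaffected.
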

\begin{proof}
 Let $A=B+C$ where 
\[
B=a_1I+a_3M_3+a_9M_9 \qquad \text{and} \qquad C= \sum_{j\not\in\{1,3,9\}}^\infty a_jM_j.
\]
The top on left side of \eqref{trick2} and the fact that $a_1>0$ imply
\[
        \|B^{-1}\|^{-1}=a_1-|a_3|+a_9.
\]
Thus, the bottom on the left side of \eqref{trick2} yields
\[
     \|C\|\leq \sum_{j\not\in\{1,3,9\}}^\infty |a_j|<\|B^{-1}\|^{-1},
\]
so indeed $A$ is invertible. The estimate on the Riesz constant is deduced from the triangle inequality.
\end{proof}

Since $a_1>0$, \eqref{trick2} supersedes \eqref{trick1}, only when
the pair $(p,q)$ is such that $a_9>0$. From this corollary we see below that the change of coordinates is invertible in a neighbourhood of the threshold set by the condition \eqref{break2}.  See Proposition~\ref{beyond2} and Figures~\ref{impro_fig_p=q} and \ref{th10ab}. 

\begin{cor} \label{main_2}
\begin{equation} \label{trick3}
 \begin{aligned}
    &  \left.\begin{aligned}
    & \left(\frac{a_3}{a_1},\frac{a_9}{a_1}   \right)\in \sf{R}_1\cap \sf{T} \\ &\sum_{j\not\in\{1,3,9\}}^\infty |a_j| < 
      (a_1-a_9)\left(1-\frac{a_3^2}{4a_1a_9}\right)^{\frac12} \end{aligned}   \right\} \Rightarrow  \\ 
      &\hspace{2cm} \left\{\begin{aligned} &A,A^{-1}\in \mc{B}(L^2) \\
      & r(\mc{S})\leq \frac{\sum_{j=1}^\infty |a_j|}{
      (a_1-a_9)\left(1-\frac{a_3^2}{4a_1a_9}\right)^{\frac12} - \sum_{j\not\in\{1,3,9\}}^\infty |a_j|  }. \end{aligned} \right.
\end{aligned}
\end{equation}
\end{cor}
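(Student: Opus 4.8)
The plan is to mimic the proof of Corollary~\ref{main_1} almost verbatim, changing only which region of Lemma~\ref{toep} we invoke to estimate $\|B^{-1}\|$. Write $A=B+C$ with the same three-term principal part
\[
B=a_1I+a_3M_3+a_9M_9 \qquad \text{and} \qquad C=\sum_{j\not\in\{1,3,9\}}^\infty a_jM_j,
\]
noting as before that $C$ is bounded on $L^2$ since $\|C\|\le\sum_{j\not\in\{1,3,9\}}|a_j|$ by the triangle inequality and the fact that each $M_j$ is an isometry.

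Next I would apply Lemma~\ref{toep} with $\alpha=a_3/a_1$ and $\beta=a_9/a_1$. Since $a_1>0$, we have $B=a_1(I+\alpha M_3+\beta M_9)$, so $\|B^{-1}\|^{-1}=a_1\,\|(I+\alpha M_3+\beta M_9)^{-1}\|^{-1}$. The hypothesis $(\alpha,\beta)\in\sf{R}_1\cap\sf{T}$ puts us in the first case of Lemma~\ref{toep}, giving
\[
\|B^{-1}\|^{-1}=a_1(1-\beta)\sqrt{1-\tfrac{\alpha^2}{4\beta}}=(a_1-a_9)\left(1-\frac{a_3^2}{4a_1a_9}\right)^{\frac12}.
\]
(One should note in passing that membership in $\sf{R}_1\subset\{\beta>0\}$ forces $a_9>0$, and membership in $\sf{T}$ gives $1-\beta>0$, so the right-hand side is a genuine positive real number; the quantity $1-\alpha^2/(4\beta)$ is automatically positive on $\sf{R}_1$ since $|\alpha(\beta+1)|<|4\beta|$ together with $\beta-\alpha+1>0$, $\beta+\alpha+1>0$ — actually $\sf{R}_1$ already lies in $\sf{T}$ up to boundary, but it is cleanest to just quote that $(\alpha,\beta)\in\sf{R}_1\cap\sf{T}$ lands in the $\sf{R}_1\cap\sf{T}$ branch.) Then the second hypothesis of \eqref{trick3} reads exactly $\|C\|\le\sum_{j\not\in\{1,3,9\}}|a_j|<\|B^{-1}\|^{-1}=\mf{m}(\tilde b)$, so Corollary~\ref{pert} applies: $A=B+C$ is invertible with $A,A^{-1}\in\mc{B}(L^2)$.

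Finally, the bound on the Riesz constant follows from \eqref{gentrick}. We have $\|A\|\le\mf{M}(\tilde b)+\|C\|$ and $\|A^{-1}\|\le(\mf{m}(\tilde b)-\|C\|)^{-1}$, but for the stated estimate it is simpler to bound $\|A\|\le\sum_{j=1}^\infty|a_j|$ directly from the triangle inequality in \eqref{decoA} (each $M_j$ an isometry), and combine with $\|A^{-1}\|\le\bigl((a_1-a_9)(1-a_3^2/(4a_1a_9))^{1/2}-\sum_{j\not\in\{1,3,9\}}|a_j|\bigr)^{-1}$; multiplying gives precisely the claimed bound on $r(\mc{S})=\|A\|\|A^{-1}\|$.

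There is really no serious obstacle here — the corollary is a routine variant of Corollary~\ref{main_1}, and all the analytic content has already been absorbed into Lemma~\ref{toep} and Corollary~\ref{pert}. The only point requiring a moment's care is confirming that the hypotheses place $(\alpha,\beta)$ squarely in the $\sf{R}_1\cap\sf{T}$ branch of the case distinction in Lemma~\ref{toep}, so that the formula for $\|B^{-1}\|^{-1}$ one plugs in is the correct one; this is immediate from the definitions of $\sf{R}_1$ and $\sf{T}$ once one observes $a_1>0$ and $a_9>0$.
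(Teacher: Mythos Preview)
Your proposal is correct and follows exactly the approach the paper intends: the paper's own proof simply reads ``The proof is similar to that of Corollary~\ref{main_1}'', and what you have written is precisely that adaptation, replacing the $\sf{R}_2\cap\sf{T}$ branch of Lemma~\ref{toep} by the $\sf{R}_1\cap\sf{T}$ branch to obtain $\|B^{-1}\|^{-1}=(a_1-a_9)(1-a_3^2/(4a_1a_9))^{1/2}$ and then invoking Corollary~\ref{pert} and the triangle inequality.
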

\begin{proof}
 The proof is similar to that of Corollary~\ref{main_1}.
\end{proof}
 
We see below that Corollary~\ref{main_1} is slightly more useful than Corollary~\ref{main_2} in the context of
the dilated $p,q$-sine functions. However the latter is needed in the proof of the main Theorem~\ref{inprovement}.

It is of course natural to ask what consequences can be derived from the other statement in Lemma~\ref{toep}. For 
\[
 \left(\frac{a_3}{a_1},\frac{a_9}{a_1}   \right)\in \sf{R}_3\cap \sf{T}, 
\]
we have $\|B^{-1}\|^{-1} =a_1-|a_3|-|a_9|$. Hence the same argument as in the proofs of corollaries \ref{main_1} and \ref{main_2} would 
reduce to \eqref{trick1}, and in this case there is no improvement. 

\section{Riesz basis properties beyond the applicability of \eqref{trick1}}      \label{ribap}
Our first goal in this section is to establish that the change of coordinates map associated to the family $\mc{S}$ is invertible beyond the region of applicability of \eqref{trick1}.  We begin by recalling a calculation which was performed in the proof of
\cite[Proposition~4.1]{EdmundsGurkaLang2012} and which will be invoked several times below. Let $a(t)$ be the inverse function of $\sin'_{p,q}(\pi_{p,q}t)$. Then
\begin{equation} \label{coefaj}
  a_j(p,q)=-\frac{2 \sqrt{2} \pi_{p,q}}{j^2\pi^2} \int_0^1 \sin \left( \frac{j \pi}{\pi_{p,q}} a(t)\right) \mr{d}t   .
\end{equation}
Indeed, integrating by parts twice and changing the variable of integration to
\[
     t= \sin'_{p,q}(\pi_{p,q} x)
\]
yields
\begin{equation*}
\begin{aligned}
a_j(p,q)&=\sqrt{2}\int_0^1 \sin_{p,q}(\pi_{p,q} x) \sin(j\pi x) \mr{d}x \\
   &= 2 \sqrt{2}  \int_0^{1/2} \sin_{p,q}(\pi_{p,q} x) \sin(j\pi x) \mr{d}x \\
   & = \frac{2 \sqrt{2} \pi_{p,q}}{j\pi} \int_0^{1/2} \sin_{p,q}'(\pi_{p,q} x) \cos(j\pi x) \mr{d}x \\
   & = -\frac{2 \sqrt{2} \pi_{p,q}}{j^2\pi^2} \int_0^{1/2} [\sin_{p,q}'(\pi_{p,q} x)]' \sin(j\pi x) \mr{d}x \\
   & = -\frac{2 \sqrt{2} \pi_{p,q}}{j^2\pi^2} \int_0^1 \sin \left( \frac{j \pi}{\pi_{p,q}} a(t)\right) \mr{d}t   .
\end{aligned} 
\end{equation*}

\begin{thm}   \label{inprovement}
Let $r=2$. Suppose that the pair $(\tilde{p},\tilde{q})$ is such that the following two conditions are satisfied
\begin{enumerate}
\item \label{improa} $a_3(\tilde{p},\tilde{q}),\,a_9(\tilde{p},\tilde{q})>0$
\item \label{improc} $\sum_{j=3}^\infty |a_j(\tilde{p},\tilde{q})|=a_1(\tilde{p},\tilde{q})$.
\end{enumerate}
Then there exists a neighbourhood $(\tilde{p},\tilde{q})\in\mathcal{N}\subset (1,\infty)^2$, such that the change of coordinates $A$ is invertible for all
$(p,q)\in \mathcal{N}$. 
\end{thm}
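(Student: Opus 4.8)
The plan is to apply the perturbation criterion Corollary~\ref{pert} to the three-term splitting already used in Corollaries~\ref{main_1}--\ref{main_2}: write $A=B+C$ with $B=a_1M_1+a_3M_3+a_9M_9$ and $C=\sum_{j\notin\{1,3,9\}}a_jM_j$, the latter a bounded operator since $\sum_{j\ge1}|a_j|<\infty$ by \eqref{coefaj}. Because $M_{3^0}=I$, $M_{3^1}=M_3$ and $M_{3^2}=M_9$, the operator $B$ is of the form \eqref{opeB} with $j=3$ and symbol $\tilde b(z)=a_1+a_3z+a_9z^2$, so by Theorem~\ref{generic_toep} it is invertible iff $\mf m(\tilde b)>0$, in which case $\|B^{-1}\|^{-1}=\mf m(\tilde b)$; and since each $M_j$ is an isometry, $\|C\|\le\sum_{j\notin\{1,3,9\}}|a_j|$. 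Thus the theorem reduces to exhibiting a neighbourhood of $(\tilde p,\tilde q)$ on which $\mf m(\tilde b_{p,q})>\sum_{j\notin\{1,3,9\}}|a_j(p,q)|$, where $\tilde b_{p,q}(z)=a_1(p,q)+a_3(p,q)z+a_9(p,q)z^2$.

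First I would settle the single point $(\tilde p,\tilde q)$. There, hypotheses \ref{improa}--\ref{improc} give $\sum_{j\notin\{1,3,9\}}|a_j|=a_1-a_3-a_9\ge0$, so it suffices to prove the \emph{strict} bound $\mf m(\tilde b)=\min_{z\in\overline{\mb D}}|a_1+a_3z+a_9z^2|>a_1-a_3-a_9$. For $|z|\le1$ the triangle inequality gives $|a_1+a_3z+a_9z^2|\ge a_1-a_3|z|-a_9|z|^2\ge a_1-a_3-a_9$, which is already strict when $|z|<1$ since $a_3>0$; and for $|z|=1$ equality throughout would require both $|a_3z+a_9z^2|=a_3+a_9$, which forces $z=1$, and $a_3z+a_9z^2$ to be a non-positive real, whereas $z=1$ makes it $a_3+a_9>0$ --- a contradiction. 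Hence $\mf m(\tilde b)>a_1-a_3-a_9\ge0$, so $B$ is invertible and $\|C\|\le a_1-a_3-a_9<\mf m(\tilde b)=\|B^{-1}\|^{-1}$, and Corollary~\ref{pert} gives the invertibility of $A$ at $(\tilde p,\tilde q)$. (Equivalently, one uses Lemma~\ref{ajint} to place $(a_3/a_1,a_9/a_1)$ in $\sf{T}$ and then applies Corollary~\ref{main_1} if that point lies in $\sf{R}_2$ and Corollary~\ref{main_2} if it lies in $\sf{R}_1$; the strict bound above is precisely their respective hypotheses in this situation.)

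To pass from the point to a neighbourhood I would introduce $\phi(p,q)=\mf m(\tilde b_{p,q})-\sum_{j\notin\{1,3,9\}}|a_j(p,q)|$ on $(1,\infty)^2$ and check it is continuous. Granting this, Step~1 gives $\phi(\tilde p,\tilde q)>0$, hence $\phi>0$ on a neighbourhood $\mc N\subset(1,\infty)^2$ of $(\tilde p,\tilde q)$, and for $(p,q)\in\mc N$ one has $\mf m(\tilde b_{p,q})>\sum_{j\notin\{1,3,9\}}|a_j(p,q)|\ge0$, so $B_{p,q}$ is invertible and $\|C_{p,q}\|\le\sum_{j\notin\{1,3,9\}}|a_j(p,q)|<\|B_{p,q}^{-1}\|^{-1}$, whence $A$ is invertible on $\mc N$ by Corollary~\ref{pert}. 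Continuity of $\phi$ is routine: each $a_j(p,q)$ is continuous by dominated convergence in the defining integral (using $|\sin_{p,q}|\le1$ and continuity of $\pi_{p,q}$), the estimate $|a_j(p,q)|\le 2\sqrt2\,\pi_{p,q}/(j^2\pi^2)$ from \eqref{coefaj} makes $\sum_{j\notin\{1,3,9\}}|a_j(p,q)|$ a locally uniform limit of continuous partial sums, and $\mf m(\tilde b_{p,q})=\min_{z\in\overline{\mb D}}|\tilde b_{p,q}(z)|$ is continuous as a minimum over the fixed compact set $\overline{\mb D}$ of a function jointly continuous in $(p,q)$ and $z$.

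The one genuinely non-formal point is the strict inequality $\mf m(\tilde b)>a_1-a_3-a_9$ at $(\tilde p,\tilde q)$: without strictness Corollary~\ref{pert} would not apply, since all we know a priori is $\|C\|\le a_1-a_3-a_9$. It is exactly positivity hypothesis \ref{improa} that excludes equality in the triangle inequality (equality would force $z=1$ and hence $a_3z+a_9z^2=a_3+a_9>0$, contradicting the requirement that this quantity be a non-positive real). Everything else is continuity together with the perturbation estimate already established.
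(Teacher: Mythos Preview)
Your proof is correct and follows the same overall scheme as the paper: decompose $A=B+C$ with $B=a_1I+a_3M_3+a_9M_9$, invoke Theorem~\ref{generic_toep} and Corollary~\ref{pert}, and extend to a neighbourhood by continuity.

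Where you differ is in the verification of the key strict inequality $\mf m(\tilde b)>a_1-a_3-a_9$ at $(\tilde p,\tilde q)$. The paper computes $\mf m(\tilde b)$ \emph{exactly} via Lemma~\ref{toep}, which forces a case split according to whether $(a_3/a_1,a_9/a_1)$ lies in $\sf{R}_1$ or $\sf{R}_2$; in the $\sf{R}_1$ case it then proves the algebraic inequality $1-\beta-\alpha<(1-\beta)\sqrt{1-\alpha^2/(4\beta)}$ to reach the conclusion. Your triangle-inequality argument on $\overline{\mb D}$ bypasses both the explicit formula and the case analysis entirely, and in fact does not even need to know that $(a_3/a_1,a_9/a_1)\in\sf T$. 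This is a genuine simplification for the purpose of the theorem as stated. What the paper's route buys in return is that, via Corollaries~\ref{main_1} and~\ref{main_2}, it produces explicit upper bounds on the Riesz constant $r(\mc S)$ in each region, which your argument does not recover; but those bounds are not part of the present statement. Your continuity argument (local uniform summability from \eqref{coefaj}, and continuity of the min over the compact $\overline{\mb D}$) matches the paper's use of dominated convergence.
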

\begin{proof}
From the Dominated Convergence Theorem, it follows that each $a_j(p,q)$ is a continuous function of the parameters $p$ and $q$. Therefore, by virtue of \eqref{coefaj} and a further application of the Dominated Convergence Theorem, also  $\sum_{j\in \mc{F}} |a_j|$ is continuous in the parameters $p$ and $q$. Here $\mc{F}$ can be any fixed set of indices, but below in this proof we only need to consider $\mc{F}=\mb{N}\setminus\{1,9\}$ for the first possibility and 
$\mc{F}=\mb{N}\setminus\{1,3,9\}$ for the second possibility.

Write $\tilde{a}_j=a_j(\tilde{p},\tilde{q})$. The hypothesis implies $\left(\frac{\tilde{a}_3}{\tilde{a}_1},\frac{\tilde{a}_9}{\tilde{a}_1}\right)\in \sf{T}$, because
\[
     0<\frac{\tilde{a}_3}{\tilde{a}_1}+\frac{\tilde{a}_9}{\tilde{a}_1}<1.
\]
Therefore 
\begin{equation}\label{ne1}
\left(\frac{a_3}{a_1},\frac{a_9}{a_1}\right)\in \mathsf{T}\cap(0,1)^2 \qquad
\forall (p,q)\in \mc{N}_1
\end{equation}
for a suitable neighbourhood $(\tilde{p},\tilde{q})\in\mc{N}_1\subset (1,\infty)^2$. Two possibilities are now in place.

\subsubsection*{First possibility} $(\frac{\tilde{a}_3}{\tilde{a}_1},\frac{\tilde{a}_9}{\tilde{a}_1})\in \sf{R}_2\cap \sf{T}$. Note that $\sum_{j\not\in\{1,9\} } |\tilde{a}_j|<\tilde{a}_1+\tilde{a}_9$ is an immediate consequence of \ref{improa} and \ref{improc}. By continuity of all quantities involved, there exists a neighbourhood $(\tilde{p},\tilde{q})\in\mathcal{N}_2\subset (1,\infty)^2$ such that the left hand side and hence the right hand side of \eqref{trick2} hold true for all $(p,q)\in \mathcal{N}_2$.

\subsubsection*{Second possibility} $(\frac{\tilde{a}_3}{\tilde{a}_1},\frac{\tilde{a}_9}{\tilde{a}_1})\in \sf{R}_1\cap \sf{T}$. 
Substitute $\alpha=\frac{\tilde{a}_3}{\tilde{a}_1}$ and $\beta=\frac{\tilde{a}_9}{\tilde{a}_1}$. If $(\alpha,\beta)\in\sf{R}_1\cap (0,1)^2$, then
\begin{equation}  
  \label{in_trick3}
        1-\beta-\alpha<(1-\beta)\sqrt{1-\frac{\alpha^2}{4\beta}}.
\end{equation}
Indeed, the conditions on $\alpha$ and $\beta$ give
\[
    0<\alpha,\beta <1, \quad \alpha(\beta+1)<4\beta \quad \text{and} \quad \alpha+\beta<1.
\]
As $\beta>\frac{\alpha}{4-\alpha}$,
\[
      \sqrt{1-\frac{\alpha^2}{4\beta}}>\sqrt{\frac{4-4\alpha+\alpha^2}{4}}=1-\frac{\alpha}{2}.
\]
Thus
\[
   (1-\beta)\sqrt{1-\frac{\alpha^2}{4\beta}}>(1-\beta)\left(1-\frac{\alpha}{2}\right)=
    1-\beta-\frac{\alpha}{2}+\frac{\alpha \beta}{2}>1-\beta-\alpha
\]
which is \eqref{in_trick3}.
Hence
 \[
      \sum_{j\not\in\{1,3,9\}}^\infty |\tilde{a}_j| = (\tilde{a}_1-\tilde{a}_9-\tilde{a}_3)<(\tilde{a}_1-\tilde{a}_9)\sqrt{1-\frac{\tilde{a}_3^2}{4\tilde{a}_1\tilde{a}_9}}.
\]
Thus, once again by continuity of all quantities involved, there exists a neighbourhood $(\tilde{p},\tilde{q})\in\mathcal{N}_3\subset (1,\infty)^2$ such that the left hand side and hence the right hand side of \eqref{trick3} hold true for all $(p,q)\in \mathcal{N}_3$.

\medskip

The conclusion follows by defining either $\mc{N}=\mc{N}_1\cap\mc{N}_2$ or $\mc{N}=\mc{N}_1\cap\mc{N}_3$.
\end{proof}

We now examine other further consequences of the corollaries \ref{main_1} and \ref{main_2}.

\begin{thm}  \label{impro_implicit}
Any of the following conditions ensure the invertibility of the change of coordinates map $A:L^r\longrightarrow L^r$.
\begin{enumerate}
\item \label{aimplicit} ($r>1$): 
\begin{equation}   \label{eqaimplicit}
      \frac{\pi_{p,q}}{a_1} <\frac{2\sqrt{2} \pi^2}{\pi^2-8}.
\end{equation}
\item \label{bimplicit} ($r=2$): $a_3>0$, $a_9>0$, $a_3(a_1+a_9)\geq 4a_9a_1$ and
\[
      \frac{\pi_{p,q}}{a_1+a_9} <\frac{\pi^2}{\left(\frac{\pi^2}{8}-\frac{82}{81}\right)2\sqrt{2} }.
\]
\item \label{cimplicit} ($r=2$): $a_3>0$, $a_9>0$, $a_3(a_1+a_9)< 4a_9a_1$ and
\[
      \frac{\pi_{p,q}}{(a_1-a_9)\left( 1-\frac{a_3^2}{4a_1a_9}   \right)^{1/2}} 
      <\frac{\pi^2}{\left(\frac{\pi^2}{8}-\frac{91}{81}\right)2\sqrt{2} }.
\]
\end{enumerate}
\end{thm}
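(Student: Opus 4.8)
The plan is to derive each of the three conditions from the invertibility criteria already established, by bounding the tail $\sum_{j\ge 3}|a_j|$ (or $\sum_{j\notin\{1,3,9\}}|a_j|$) from above using the integral representation \eqref{coefaj}. The uniform bound $\left|\sin\left(\tfrac{j\pi}{\pi_{p,q}}a(t)\right)\right|\le 1$ gives $|a_j|\le \tfrac{2\sqrt 2\,\pi_{p,q}}{j^2\pi^2}$ for every $j$, so $\sum_{j=3}^\infty |a_j| \le \tfrac{2\sqrt2\,\pi_{p,q}}{\pi^2}\sum_{j=3,\,j \text{ odd}}^\infty \tfrac{1}{j^2} = \tfrac{2\sqrt2\,\pi_{p,q}}{\pi^2}\left(\tfrac{\pi^2}{8}-1\right)$, using $\sum_{j\ \mathrm{odd}} j^{-2}=\pi^2/8$ and subtracting the $j=1$ term. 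Similarly $\sum_{j\notin\{1,3,9\}}|a_j| \le \tfrac{2\sqrt2\,\pi_{p,q}}{\pi^2}\left(\tfrac{\pi^2}{8}-1-\tfrac19\right)=\tfrac{2\sqrt2\,\pi_{p,q}}{\pi^2}\left(\tfrac{\pi^2}{8}-\tfrac{10}{9}\right)$, and $\sum_{j\notin\{1,9\}}|a_j|\le \tfrac{2\sqrt2\,\pi_{p,q}}{\pi^2}\left(\tfrac{\pi^2}{8}-\tfrac19\right)$. (The precise constants $82/81$ and $91/81$ in \ref{bimplicit} and \ref{cimplicit} will come out of these sums once one is careful about exactly which indices are excluded — I expect $82/81 = \tfrac19 + 1$ rewritten, i.e. $\tfrac{82}{81}$ arises from $1+\tfrac19$ scaled, so I would recheck the bookkeeping of the excluded index set; the point is purely arithmetic.)

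For part \ref{aimplicit}, I would invoke \eqref{trick1}: since $a_1>0$, the left side $\sum_{j\ge 3}|a_j|<a_1$ is implied by $\tfrac{2\sqrt2\,\pi_{p,q}}{\pi^2}\left(\tfrac{\pi^2}{8}-1\right)<a_1$, which rearranges exactly to \eqref{eqaimplicit}. This also recovers the known condition of \cite{EdmundsGurkaLang2012} up to the explicit form. For part \ref{bimplicit}, the hypotheses $a_3,a_9>0$ and $a_3(a_1+a_9)\ge 4a_9a_1$ place $\left(\tfrac{a_3}{a_1},\tfrac{a_9}{a_1}\right)$ in $\sf{R}_2$, and Lemma~\ref{ajint} together with $a_3,a_9>0$ gives membership in $\sf{T}$ (one needs $\tfrac{a_3}{a_1}+\tfrac{a_9}{a_1}<1$, which follows from $a_3<a_1$ applied carefully, or more safely from $a_1 > \sum_{j\ge3}|a_j| \ge a_3+a_9$ under the running hypothesis — I would state this step explicitly). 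Then Corollary~\ref{main_1} applies provided $\sum_{j\notin\{1,9\}}|a_j|<a_1+a_9$, and substituting the tail bound gives the stated inequality. Part \ref{cimplicit} is identical but uses Corollary~\ref{main_2}: the reversed inequality $a_3(a_1+a_9)<4a_9a_1$ puts the pair in $\sf{R}_1\cap\sf{T}$, and the tail bound $\sum_{j\notin\{1,3,9\}}|a_j|<(a_1-a_9)\bigl(1-\tfrac{a_3^2}{4a_1a_9}\bigr)^{1/2}$ is exactly the displayed condition after dividing through.

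The main obstacle I anticipate is not analytic but a matter of verifying the $\sf{T}$-membership in cases \ref{bimplicit} and \ref{cimplicit}: one must confirm $\beta<1$, $\beta-\alpha+1>0$, $\beta+\alpha+1>0$ for $\alpha=\tfrac{a_3}{a_1}$, $\beta=\tfrac{a_9}{a_1}$. With $a_3,a_9>0$ the last two are automatic ($\beta>0$ and $\alpha<1$ suffices, and $\alpha<1$ is Lemma~\ref{ajint}), and $\beta<1$ is also Lemma~\ref{ajint}; but one should double-check that the final tail inequality being assumed is strong enough to be consistent — if it fails then there is nothing to prove, so strictly one may assume it. A secondary subtlety is that in \ref{bimplicit}, $\sf{R}_2\cap\sf{T}$ requires exactly the closed condition $|\alpha(\beta+1)|\ge|4\beta|$, which for positive $\alpha,\beta$ is $a_3(a_1+a_9)\ge 4a_9a_1$ — matching the hypothesis — so no boundary issue arises. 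I would close by noting these three cases exhaust the relevant configurations and that each implication is an immediate application of \eqref{trick1}, Corollary~\ref{main_1}, or Corollary~\ref{main_2} respectively.
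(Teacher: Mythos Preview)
Your approach is essentially the same as the paper's: bound $|a_j|\le \tfrac{2\sqrt2\,\pi_{p,q}}{j^2\pi^2}$ via \eqref{coefaj}, sum over the appropriate odd indices, and feed the result into \eqref{trick1}, Corollary~\ref{main_1}, or Corollary~\ref{main_2} respectively. The $\sf{T}$-membership is handled exactly as the paper does --- Lemma~\ref{ajint} together with $a_3,a_9>0$ gives $(a_3/a_1,a_9/a_1)\in(0,1)^2\subset\sf{T}$ in one line, so your more elaborate case-check is unnecessary (and the condition $\alpha+\beta<1$ you mention is not one of the defining inequalities of $\sf{T}$).

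The only real slip is the arithmetic you flagged yourself. The constants are
\[
\frac{82}{81}=1+\frac{1}{81}=\frac{1}{1^2}+\frac{1}{9^2},\qquad
\frac{91}{81}=1+\frac{1}{9}+\frac{1}{81}=\frac{1}{1^2}+\frac{1}{3^2}+\frac{1}{9^2},
\]
so the relevant tail bounds are $\sum_{j\notin\{1,9\}}|a_j|\le \tfrac{2\sqrt2\,\pi_{p,q}}{\pi^2}\bigl(\tfrac{\pi^2}{8}-\tfrac{82}{81}\bigr)$ and $\sum_{j\notin\{1,3,9\}}|a_j|\le \tfrac{2\sqrt2\,\pi_{p,q}}{\pi^2}\bigl(\tfrac{\pi^2}{8}-\tfrac{91}{81}\bigr)$. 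Your expressions $\tfrac{\pi^2}{8}-\tfrac19$ and $\tfrac{\pi^2}{8}-\tfrac{10}{9}$ correspond to excluding the wrong index sets (you dropped the $1/9^2$ term and, in the first case, also the $1/1^2$ term). Once this bookkeeping is fixed, your proof matches the paper's line for line.
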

\begin{proof}
From  \eqref{coefaj}, it follows that
\begin{equation}
     \sum_{j\not\in\{1\}} |a_j|\leq \frac{2 \sqrt{2} \pi_{p,q}}{\pi^2}\left(\frac{\pi^2}{8}-1\right)  \label{sum1}.
\end{equation}
Hence the condition \ref{aimplicit} implies that the hypothesis \eqref{trick1} is satisfied.

By virtue of Lemma~\ref{ajint}, it is guaranteed that 
\[
     \left(\frac{a_3}{a_1},\frac{a_9}{a_1}\right)\in (0,1)^2\subset \sf{T}
\]
 in the settings of \ref{bimplicit} or  \ref{cimplicit}.
From  \eqref{coefaj}, it also follows that
 \begin{align}    
     \quad \sum_{j\not\in\{1,9\}} |a_j|& \leq \frac{2 \sqrt{2} \pi_{p,q}}{\pi^2}\left(\frac{\pi^2}{8}-\frac{82}{81} \right)\label{sum2}
     \quad \text{and that} \\
     \quad \sum_{j\not\in \{1,3,9\}} |a_j|&\leq \frac{2 \sqrt{2} \pi_{p,q}}{\pi^2} \left(\frac{\pi^2}{8}-\frac{91}{81}  \right)\label{sum3} .
\end{align}
Combining each one of these assertions with \eqref{trick2} and \eqref{trick3}, respectively, immediately leads to the claimed statement.
\end{proof}

We recover \cite[Corollary~4.3]{EdmundsGurkaLang2012} from the part \ref{aimplicit} of this theorem by observing that for all $p,q>1$,
\[
       a_1\geq 2\sqrt{2} \int_0^{1/2} 2x \sin(\pi x) \mr{d}x = \frac{4\sqrt{2}}{\pi^2}. 
\]
In fact, for $(p,q)\in(1,2)^2$, the better estimate
\[
       a_1\geq 2\sqrt{2} \int_0^{1}  \sin^2(\pi x) \mr{d}x = \frac{\sqrt{2}}{2}, 
\]
ensures invertibility of $A$ for all $r>1$ whenever
\begin{equation} \label{impro_casea}
       \pi_{p,q}<\frac{2\pi^2}{\pi^2-8}.
\end{equation}
See figures~\ref{th10ab} and \ref{th10c}. 


\section{The case of equal indices} \label{casep=q}

We now consider in closer detail the particular case $p=q<2$. Our analysis requires setting various sharp upper and lower bounds on the 
coefficients $a_j(p,p)$ for $j=1,3,5,7,9$. This is our first goal.

\begin{figure}
\centerline{
\includegraphics[width=9cm, angle=0]{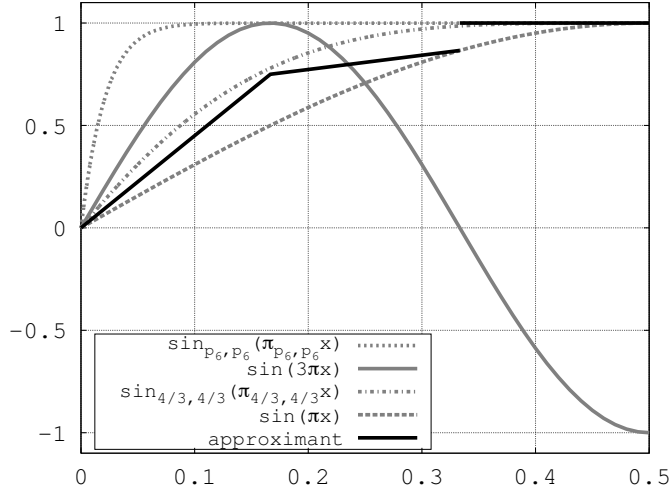}}
\caption{Approximants $\ell_j(x)$ employed to show bound \ref{a3l} in
  Lemma~\ref{ajpositive}. For reference we also show
  $\sin_{p_6,p_6}(\pi_{p_6,p_6}x)$,   $\sin(3\pi x)$, $\sin_{\frac43,\frac43}(\pi_{\frac43,\frac43}x)$ and 
  $\sin_{2,2}(\pi x)=\sin(\pi x)$.
\label{fig:interp}
}
\end{figure}

\begin{lem}   \label{ajpositive}
\
\begin{enumerate}
\item \label{a3l} $a_3(p,p)>0$ for all $1<p\leq \frac43$
\item  \label{a5l} $a_5(p,p)>0$ for all $1<p\leq \frac65$
\item  \label{a7l} $a_7(p,p)>0$ for all $1<p\leq \frac65$
\item  \label{a9l} $a_9(p,p)>0$ for all $1<p\leq \frac{12}{11}$
\end{enumerate}
\end{lem}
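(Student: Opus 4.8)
The plan is to establish each positivity statement by exhibiting, for the relevant range of $p$, an explicit piecewise-affine (or piecewise-polynomial) minorant $\ell_j(x)$ of the function $\sin_{p,p}(\pi_{p,p}x)$ on $[0,\tfrac12]$ such that the weighted integral $\int_0^{1/2}\ell_j(x)\sin(j\pi x)\,\mr dx$ is manifestly positive, together with a matching argument that controls the sign of $\sin(j\pi x)$ on the subintervals where $\ell_j$ sits below the true curve. More precisely, since $\sin_{p,p}(\pi_{p,p}x)$ is continuous, increasing on $(0,\tfrac12)$, concave there (this last fact following from the explicit ODE $(\sin_{p,p}')^{p-1}$ satisfied by the $p$-sine, or from the known shape results in \cite{EdmundsLang2011}), and vanishes at $0$ with value $1$ at $\tfrac12$, its graph lies above the chord through $(0,0)$ and $(\tfrac12,1)$ and below any tangent; I would use monotonicity of the family in $p$ to reduce each case to the endpoint value of $p$ (namely $p=\tfrac43$ for \ref{a3l}, $p=\tfrac65$ for \ref{a5l}--\ref{a7l}, $p=\tfrac{12}{11}$ for \ref{a9l}), exploiting that $\sin_{p,p}(\pi_{p,p}x)$ is monotone in $p$ for fixed $x\in(0,\tfrac12)$ — so that a lower bound good enough at the worst endpoint suffices throughout the interval.

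The first concrete step is to record the qualitative facts about $\sin_{p,p}(\pi_{p,p}x)$ I have just used: positivity and monotonicity on $(0,\tfrac12)$, concavity, the boundary values, and the monotone dependence on $p$; these let me replace the transcendental integrand by a computable minorant. Next, for each $j\in\{3,5,7,9\}$ I split $[0,\tfrac12]$ at the zeros of $\sin(j\pi x)$, i.e. at the points $k/j$, into alternating intervals where $\sin(j\pi x)$ is positive or negative, exactly as in the proof of Lemma~\ref{ajint}; on the ``good'' (positive-integrand) intervals I bound $\sin_{p,p}(\pi_{p,p}x)$ below by a suitable affine function through two points of its graph (using concavity to guarantee the bound), and on the ``bad'' intervals I bound it above similarly. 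Then $a_j(p,p)/(2\sqrt2)$ is bounded below by the sum of the resulting elementary integrals, which is a finite sum of values of the chosen approximants times explicit trigonometric integrals; I verify this lower bound is $>0$ by direct computation. For $j=9$ the relevant partition uses the same nine nodes appearing in Lemma~\ref{ajint}, adapted to $p=\tfrac{12}{11}$, and the approximants $\ell_9$ are displayed in Figure~\ref{fig:interp}.

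The main obstacle is the bookkeeping for $j=9$ (and to a lesser extent $j=7$): there are several sign changes of $\sin(9\pi x)$ on $[0,\tfrac12]$, so one must choose the piecewise minorant carefully enough that the negative contributions from the ``bad'' subintervals are strictly dominated, yet simple enough that the integrals stay elementary. The delicate point is that the desired $p$-range for $a_9>0$ is only up to $\tfrac{12}{11}$, close to the numerical boundary, so the bound must be reasonably sharp; I expect to need a two- or three-piece affine minorant on the first (largest) positive interval and cruder bounds elsewhere, and to invoke the comparison inequality between $\sin(j\pi x)-\sin(\pi x)$ and its shift (as in Lemma~\ref{ajint}) to pair up intervals. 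By contrast \ref{a3l} is easy: $\sin(\pi x)-\sin(3\pi x)=-2\sin(\pi x)\cos(2\pi x)$ has a single sign change at $x=\tfrac14$, and already the chord minorant, combined with the oddness of $\cos(2\pi x)$ about $\tfrac14$, gives $a_3(p,p)>0$ for all $1<p\le\tfrac43$ after checking the worst case; I would streamline \ref{a3l} along exactly those lines and then treat \ref{a5l}, \ref{a7l}, \ref{a9l} in increasing order of complexity. All the elementary integrals are deferred to the appendix codes.
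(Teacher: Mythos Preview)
Your overall strategy --- exploit concavity of $\sin_{p,p}(\pi_{p,p}x)$ on $(0,\tfrac12)$ together with the monotone dependence on $p$ (so it suffices to work at the upper endpoint of each $p$-range), build piecewise-affine comparison functions, and reduce to elementary trigonometric integrals --- is exactly what the paper does. The gap is in the details you sketch, and two of them would actually make the argument fail as written.

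First, your treatment of \ref{a3l} is confused with Lemma~\ref{ajint}. The factorisation $\sin(\pi x)-\sin(3\pi x)=-2\sin(\pi x)\cos(2\pi x)$ and the sign change at $x=\tfrac14$ are the ingredients for proving $a_3<a_1$, not $a_3>0$. For $a_3>0$ you must control $\int_0^{1/2}\sin_{p,p}(\pi_{p,p}x)\sin(3\pi x)\,\mr dx$, and $\sin(3\pi x)$ changes sign at $x=\tfrac13$, not $\tfrac14$. Moreover the chord minorant $\sin_{p,p}(\pi_{p,p}x)\ge 2x$ does \emph{not} suffice: $\int_0^{1/2}2x\sin(3\pi x)\,\mr dx=-\tfrac{2}{9\pi^2}<0$. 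The paper instead uses nodes at $\{0,\tfrac16,\tfrac13,\tfrac12\}$ with values $\{0,\tfrac34,\tfrac{\sqrt3}{2},1\}$, verifies (via the hypergeometric representation of $\sin_{p,p}^{-1}$ at $p=\tfrac43$) that the graph of $\sin_{p,p}$ lies above the corresponding broken line on $[0,\tfrac13]$, and on $[\tfrac13,\tfrac12]$ uses the trivial upper bound $\sin_{p,p}\le 1$ since $\sin(3\pi x)\le 0$ there. The approximants in Figure~\ref{fig:interp} are these $a_3$-approximants, not the $a_9$ ones.

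Second, on the ``bad'' intervals you cannot bound $\sin_{p,p}$ \emph{above} by an affine function through two graph points: concavity gives chords \emph{below}, not above. The paper's device on negative-$\sin(j\pi x)$ intervals is either the constant majorant $1$, or a constant $y_j<1$ already shown to lie below the graph at both endpoints (as in \ref{a9l}), or in \ref{a7l} and \ref{a9l} simply to leave certain pairs of consecutive integrals $I_k+I_{k+1}$ unevaluated and note they are positive by monotonicity. None of the cases uses the shift-comparison trick from Lemma~\ref{ajint}; the partitions for \ref{a9l} are at $\{0,\tfrac1{18},\tfrac19,\tfrac13,\tfrac49,\tfrac12\}$, not the nine nodes from that lemma.
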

\begin{proof}  All the stated bounds are determined by integrating a
  suitable  approximation of $\sin_{p,p}(\pi_{p,p}x)$. Each one
  requires a different set of quadrature points, but the general
  structure of the arguments in all cases is similar.  Without further
  mention, below we repeatedly use the fact that in terms of hypergeometric functions,
\[
       \sin_{p,q}^{-1}(y)=\int_0^y \frac{\mr{d}x}{(1-x^{q})^{\frac1p}}=y \ \hypa\!\! \left( \frac1p,\frac1q;\frac1q+1;y^q\right) \qquad \forall y\in[0,1].
\]

\subsubsection*{Bound \ref{a3l}} Let
\[
 \{x_j\}_{j=0}^3=\left\{0,\frac16,\frac13,\frac12\right\} \qquad \text{and} \qquad
 \{y_j\}_{j=0}^3=\left\{0,\frac34,\frac{\sqrt{3}}{2},1\right\}.
\]
For $x\in[x_j,x_{j+1})$ let 
\begin{equation*}
      \ell_j(x)=\frac{y_{j+1}-y_j}{x_{j+1}-x_j} (x-x_j)+y_j \text{ for } j=0,1 \qquad
\text{and} \qquad
      \ell_2(x)=1,
\end{equation*}
see Figure~\ref{fig:interp}. Since
\begin{align*}
     \sin_{\frac43,\frac43}^{-1}(y_1)   &=\left(\frac34\right) \hypa\left(\frac34,\frac34;\frac74;\left(\frac34\right)^{\frac43}\right) \\
&< \frac{105}{100}<\frac{110}{100}<\frac{\pi\sqrt{2}}{4}=\frac{\pi_{\frac43,\frac43}}{6}
\end{align*}
and $\sin_{p,p}(t)$ is an increasing function of $t\in(0,\frac{\pi_{p,p}}{2})$, then
 \begin{equation*}   \label{ineq_sin43}
       \sin_{\frac43,\frac43}\left(\pi_{\frac43,\frac43}x_1\right)>y_1.
 \end{equation*}
 
According to \cite[Corollary~4.4]{BushellEdmunds2012}\footnote{See also \cite[Lemma~5]{BBCDG2006}.}, $\sin_{p,p}(\pi_{p,p}x)$ increases as $p$ decreases for any fixed $x\in (0,1)$.   Let $p$ be as in the hypothesis. Then
  \begin{equation*}   \label{ineq_sin}
       \sin_{p,p}\left(\pi_{p,p} x_1\right)>y_1
 \end{equation*}
and similarly 
 \begin{equation*} 
       \sin_{p,p}\left(\pi_{p,p}x_2\right)>\sin_{2,2}\left(\pi_{2,2}x_2\right)=y_2.
 \end{equation*}
By virtue of \cite[Lemma~3]{BBCDG2006} the function $\sin_{p,p}(t)$ is strictly concave for $t\in(0,\frac{\pi_{p,p}}{2})$. 
Then, in fact,
\begin{align*}
     \sin_{p,p}(\pi_{p,p}x)&> \ell_0(x)=\frac92 x & \forall x\in\left(x_0,x_1 \right)  \\
     \sin_{p,p}(\pi_{p,p}x)&> \ell_1(x)=\left( 3\sqrt{3}-\frac92\right)x+\frac{3-\sqrt{3}}{2} & \forall x\in\left(x_1,x_2 \right)   .
\end{align*}

Let
\[
        I_j=2\sqrt{2}\int_{x_j}^{x_{j+1}} \ell_j(x) \sin(3\pi x) \mr{d}x.
\]
Since $\sin(3\pi x)\leq 0$ for $x\in(\frac13,\frac12)$ and $|\sin_{p,p}(\pi_{p,p}x)|\leq 1$, 
\begin{equation*}
\begin{aligned}
a_3(p,p)& = 2\sqrt{2} \int_0^{\frac12}  \sin_{p,p}(\pi_{p,p}x) \sin(3\pi x) \mr{d}x  \\
    & > I_0+I_1+I_2\\
  & = 2\sqrt{2}\left(\frac{1}{2\pi^2} +\frac{(\pi-2)\sqrt{3}+3}{6\pi^2} - \frac{1}{3\pi}\right)>0.
  \end{aligned}
\end{equation*} 


\subsubsection*{Bound \ref{a5l}} Note that
\[
      \pi_{\frac65,\frac65}=\frac{10\pi}{3}.
\] 
Set
\[
 \{x_j\}_{j=0}^4=\left\{0,\frac{1}{10},\frac15,\frac25,\frac12\right\} \text{ and }
 \{y_j\}_{j=0}^4=\left\{0,\frac{171}{250},\frac{93}{100},\frac{99}{100},1\right\}.
\]
Then
\[
     \sin_{\frac65,\frac65}^{-1}(y_1)=y_1 \ \hypa\left(\frac56,\frac56;\frac{11}{6};y_1^{\frac65}\right) < 1<\frac{\pi}{3}=\pi_{\frac65,\frac65}x_1
\]
and so
 \begin{equation*}
       \sin_{\frac65,\frac65}\left(\pi_{\frac65,\frac65}x_1\right)>y_1.
 \end{equation*}
Also
\[
     \sin_{\frac65,\frac65}^{-1}(y_2)< 2<\pi_{\frac65,\frac65}x_2 \quad \text{and} \quad
     \sin_{\frac65,\frac65}^{-1}(y_3)< 3<\pi_{\frac65,\frac65}x_3,
\]
so
 \begin{equation*}
       \sin_{\frac65,\frac65}\left(\pi_{\frac65,\frac65}x_j\right)>y_j \qquad j=2,3.
 \end{equation*}
 
 Let $p$ be as in the hypothesis. Then, similarly to the previous case \ref{a3l},
  \begin{equation}  \label{fora5l}
         \sin_{p,p}\left(\pi_{p,p} x_j\right)>y_j \qquad j=1,2,3.
 \end{equation}
Set
\begin{align*}
      \ell_j(x)&=\frac{y_{j+1}-y_j}{x_{j+1}-x_j} (x-x_j)+y_j \qquad j=0,1,3 \\
      \ell_2(x)&=1.
\end{align*}
By strict concavity and \eqref{fora5l},
\[
     \sin_{p,p}(\pi_{p,p}x)> \ell_j(x) \qquad \qquad \forall x\in\left(x_j,x_{j+1} \right)\quad j=0,1,3. 
\]
Let
\[
        I_j=2\sqrt{2}\int_{x_j}^{x_{j+1}} \ell_j(x) \sin(5\pi x) \mr{d}x \qquad \qquad j=0,1,2,3.
\]
Then
\begin{equation*}
a_5(p,p)>\sum_{j=0}^3 I_j>\frac{3}{100}>0
\end{equation*} 
as claimed.


\subsubsection*{Bound \ref{a7l}} Let $p$ be as in the hypothesis. Set
\begin{align*}
 \{x_j\}_{j=0}^5=\left\{0,\frac{1}{14},\frac17,\frac27,\frac37,\frac12\right\} \text{ and }
 \{y_j\}_{j=0}^5=\left\{0,\frac{283}{500},\frac{106}{125},1,1,1\right\}.
\end{align*}
Then
\begin{align*}
     \sin_{\frac65,\frac65}^{-1}(y_1)&< \frac{73}{100}<\pi_{\frac65,\frac65}x_1 \quad \text{and} \quad
     \sin_{\frac65,\frac65}^{-1}(y_2)<\frac{147}{100}<\pi_{\frac65,\frac65}x_2.
\end{align*}
Hence
  \begin{equation*}  
         \sin_{p,p}\left(\pi_{p,p} x_j\right)>y_j \qquad j=1,2.
 \end{equation*}
Put
\begin{align*}
      \ell_j(x)&=\frac{y_{j+1}-y_j}{x_{j+1}-x_j} (x-x_j)+y_j \qquad j=0,1 \\
      \ell_4(x)&=1.
\end{align*}
Then,
\[
     \sin_{p,p}(\pi_{p,p}x)> \ell_j(x) \qquad \qquad \forall x\in\left(x_j,x_{j+1} \right)\quad j=0,1. 
\]
Let
\begin{align*}
        I_j&=2\sqrt{2}\int_{x_j}^{x_{j+1}} \ell_j(x) \sin(7\pi x) \mr{d}x & j=0,1,4 \\
        I_j&=2\sqrt{2}\int_{x_j}^{x_{j+1}} \sin_{p,p}(\pi_{p,p}x) \sin(7\pi x) \mr{d}x & j=2,3.
\end{align*}
Since $\sin(7\pi x)$ is negative for $x\in(x_2,x_3)$ and positive for $x\in(x_3,x_4)$, then  $I_2+I_3>0$.
Hence
\begin{equation*}
a_7(p,p)>I_0+I_1+I_4>\frac{3}{1000}>0.
\end{equation*}

\subsubsection*{Bound \ref{a9l}} 
Note that
\[
   \pi_{\frac{12}{11},\frac{12}{11}}=\frac{11\pi \sqrt{2}}{3(\sqrt{3}-1)}.
\]
Let $p$ be as in the hypothesis. Set
\[
 \{x_j\}_{j=0}^5=\left\{0,\frac{1}{18},\frac19,\frac13,\frac49,\frac12\right\} \text{ and }
 \{y_j\}_{j=0}^5=\left\{0,\frac{17}{24},\frac{15}{16},\frac{15}{16},\frac{15}{16},\frac{15}{16}\right\}.
\]
Then
\begin{align*}
     \sin_{\frac{12}{11},\frac{12}{11}}^{-1}(y_1)&< \frac{112}{100}<\pi_{\frac{12}{11},\frac{12}{11}}x_1 \quad \text{and} \quad
     \sin_{\frac{12}{11},\frac{12}{11}}^{-1}(y_2)<\frac{233}{100}<\pi_{\frac{12}{11},\frac{12}{11}}x_2.
\end{align*}
Hence
  \begin{equation*}  
         \sin_{p,p}\left(\pi_{p,p} x_j\right)>y_j \qquad j=1,2.
 \end{equation*}
Put
\begin{align*}
      \ell_j(x)&=\frac{y_{j+1}-y_j}{x_{j+1}-x_j} (x-x_j)+y_j \qquad j=0,1 \\
      \ell_3(x)&=1 \qquad \qquad \ell_4(x)=\frac{15}{16}.
\end{align*}
Then,
\[
     \sin_{p,p}(\pi_{p,p}x)> \ell_j(x) \qquad \qquad \forall x\in\left(x_j,x_{j+1} \right)\quad j=0,1,4. 
\]
Let
\begin{align*}
        I_j&=2\sqrt{2}\int_{x_j}^{x_{j+1}} \ell_j(x) \sin(9\pi x) \mr{d}x & j=0,1,3,4 \\
        I_2&=2\sqrt{2}\int_{x_2}^{x_{3}} \sin_{p,p}(\pi_{p,p}x) \sin(9\pi x) \mr{d}x .
\end{align*}
Then  $I_2>0$. Hence
\begin{equation*}
a_9(p,p)>I_0+I_1+I_3+I_4=2\sqrt{2}\left(\frac{23}{216\pi^2}-\frac{1}{72\pi}\right)>0.
\end{equation*} 
\end{proof}

The next statement is a direct consequence of combining \ref{a3l} and \ref{a9l} from this lemma with Theorem~\ref{inprovement}.

\begin{cor}   \label{beyonda}
Set $r=2$ and suppose that $1<\tilde{p}_2<\frac{12}{11}$ is such that
\[
\sum_{j=3}^\infty |a_j(\tilde{p}_2,\tilde{p}_2)|=a_1(\tilde{p}_2,\tilde{p}_2).
\]
There exists $\varepsilon>0$ such that  $A$ is invertible for all
$p\in (\tilde{p}_2-\varepsilon,\tilde{p}_2+\varepsilon)$. 
\end{cor}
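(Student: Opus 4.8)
The plan is to obtain Corollary~\ref{beyonda} as a direct application of Theorem~\ref{inprovement} to the diagonal slice $p=q$, once we have checked that the hypotheses of that theorem are met at the point $(\tilde p_2,\tilde p_2)$. Recall that Theorem~\ref{inprovement} requires two things of the base point: (i) that $a_3$ and $a_9$ are strictly positive there, and (ii) that $\sum_{j=3}^\infty |a_j| = a_1$ there. Hypothesis (ii) is exactly the defining property of $\tilde p_2$ in the statement of the corollary, so nothing needs to be done for it. Hypothesis (i) is where Lemma~\ref{ajpositive} enters: since $1<\tilde p_2<\frac{12}{11}$, part \ref{a3l} of that lemma (which covers $1<p\le\frac43\supset(1,\frac{12}{11})$) gives $a_3(\tilde p_2,\tilde p_2)>0$, and part \ref{a9l} (which covers exactly $1<p\le\frac{12}{11}$) gives $a_9(\tilde p_2,\tilde p_2)>0$.

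With both hypotheses verified, Theorem~\ref{inprovement} (applied with $(\tilde p,\tilde q)=(\tilde p_2,\tilde p_2)$) produces a two-dimensional neighbourhood $\mc{N}\subset(1,\infty)^2$ of $(\tilde p_2,\tilde p_2)$ on which the change of coordinates map $A$ is invertible as an operator on $L^2$. The final step is simply to restrict to the diagonal: the set $\{(p,p):|p-\tilde p_2|<\varepsilon\}$ is contained in $\mc{N}$ for $\varepsilon>0$ small enough, since $\mc{N}$ is open and contains $(\tilde p_2,\tilde p_2)$. This yields the claimed $\varepsilon$ and the invertibility of $A$ for all $p\in(\tilde p_2-\varepsilon,\tilde p_2+\varepsilon)$.

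There is essentially no obstacle here; the entire content of the corollary has been front-loaded into Theorem~\ref{inprovement} and Lemma~\ref{ajpositive}, and this statement is just the act of noticing that a point of the diagonal satisfying the ``break-even'' relation \eqref{break2} also satisfies the positivity conditions \ref{improa}, so that the two-parameter perturbation result applies and in particular covers a one-parameter diagonal window. The only point worth a line of care is to confirm that $\tilde p_2$ being strictly below $\frac{12}{11}$ (rather than merely below $\frac43$) is what makes part \ref{a9l} of Lemma~\ref{ajpositive} applicable and hence guarantees $a_9>0$ at the base point; without that strict inequality the second hypothesis of Theorem~\ref{inprovement} could fail and the argument would not go through.
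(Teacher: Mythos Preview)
Your proposal is correct and matches the paper's approach exactly: the paper states that the corollary is a direct consequence of combining parts \ref{a3l} and \ref{a9l} of Lemma~\ref{ajpositive} with Theorem~\ref{inprovement}, which is precisely what you do. The extra step of explicitly restricting the two-dimensional neighbourhood $\mc{N}$ to the diagonal is a minor detail the paper leaves implicit.
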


See Figure~\ref{impro_fig_p=q}.
 
\begin{rem} \label{rem_gap}
In \cite{BBCDG2006} it was claimed that the hypothesis of \eqref{trick1} held true whenever $p=q\geq p_1$ for a suitable $1<p_1<\frac{12}{11}$. The argument supporting this claim \cite[\S4]{BBCDG2006} was separated into two cases: $p\geq 2$ and $\frac{12}{11}\leq p<2$. With our definition\footnote{The Fourier coefficients in \cite{BBCDG2006} differ from $a_j(p,p)$ by a factor of $\sqrt{2}$. Note that the ground eigenfunction of the $p$-Laplacian equation in \cite{BBCDG2006} is denoted by $S_p(x)$ and it equals $\sin_{p,p}(x)$ as defined above. A key observation here is the $p$-Pythagorean identity $|\sin_{p,p}(x)|^p+|\sin_{p,p}'(x)|^p=1=|S_p(x)|^p+|S_p'(x)|^p$.}  of the Fourier coefficients, in the latter case it was claimed that $|a_j|$ was bounded above by
\[
  \frac{2\sqrt{2}\pi_{\frac{12}{11},\frac{12}{11}}}{j^2\pi^2}\left(\int_0^{\frac12} \sin_{p,p}''(\pi_{p,p}t)^2 \mr{d}t \right)^{1/2}
  \left(\int_0^{\frac12} \sin(j \pi t)^2 \mr{d}t \right)^{1/2}.
\]
As it turns,
there is a missing power 2 in the term $\pi_{\frac{12}{11},\frac{12}{11}}$ for this claim to be true. This corresponds to taking second derivatives of $\sin_{p,p}(\pi_{p,p}t)$ and it can be seen by applying the Cauchy-Schwartz inequality in \eqref{coefaj}. 
The missing factor is crucial in the argument and renders the proof of \cite[Theorem~1]{BBCDG2006} incomplete in the latter case.
\end{rem}

In the paper \cite{BushellEdmunds2012} published a few years later, it was claimed that the hypothesis of \eqref{trick1} held true
for $p=q\geq \tilde{p}_1$ where $\tilde{p}_1$ is defined by
\eqref{bueva}. It was then claimed that an approximated solution of
\eqref{bueva} was near $1.05 <\frac{12}{11}$. An accurate numerical
approximation of \eqref{bueva}, based on analytical bounds on
$a_1(p,p)$, give the correct digits $\tilde{p}_1\approx
1.198236>\frac{12}{11}$. Therefore neither the results of  
\cite{BBCDG2006} nor those of \cite{BushellEdmunds2012} include a complete proof of invertibility of the change of coordinates 
in a neighbourhood of $p=\frac{12}{11}$.

Accurate numerical estimation of $a_1(p,p)$ show that the identity
\eqref{eqaimplicit}  is valid as long as $p>\hat{p}_1\approx
1.158739>\frac{12}{11}$, which improves slightly upon the value $\tilde{p}_1$ from
\cite{BushellEdmunds2012}. However, as remarked in \cite{BushellEdmunds2012}, 
the upper bound 
\[
     |a_j|\leq \frac{2\sqrt{2} \pi_{p,p}}{j^2 \pi^2}
\]
ensuring \eqref{sum1} and hence the validity of
Theorem~\ref{impro_implicit}-\ref{aimplicit}, is too crude for small
values of $p$. Note for example that the correct regime is
$a_j(p,p)\to \frac{2\sqrt{2}}{j\pi}$  whereas $\pi_{p,p}\to \infty$ as
$p\to 1$ (see Appendix~\ref{ap1}). Therefore, in order to determine
invertibility of $A$ in the vicinity of $p=q=\frac{12}{11}$, 
it is necessary to find sharper bounds for the first few terms $|a_j|$, and employ \eqref{trick1} directly.
This is the purpose of the next lemma. See Figure~\ref{impro_fig_p=q}.

\begin{lem} \label{ajbounds}
Let $1<p\leq \frac65$. Then
\begin{enumerate}
\item \label{a1l} $a_1(p,p)>\frac{839}{1000}$
\item  \label{a3u} $a_3(p,p)<\frac{151}{500}$
\item  \label{a5u} $a_5(p,p)<\frac{181}{1000}$
\item  \label{a7u} $a_7(p,p)<\frac{13}{100}$
\end{enumerate}
\end{lem}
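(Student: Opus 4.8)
The plan is to prove each of the four bounds by the same quadrature technique already used in Lemma~\ref{ajpositive}: replace $\sin_{p,p}(\pi_{p,p}x)$ on $[0,\tfrac12]$ by an explicit piecewise-linear (or piecewise-constant) function built from a few nodes $x_j$ and values $y_j$, and then integrate the resulting elementary expression against $\sqrt2\sin(j\pi x)$ (symmetrised to $2\sqrt2\int_0^{1/2}$). The two structural facts that make this work are those invoked earlier: strict concavity of $\sin_{p,p}$ on $(0,\tfrac{\pi_{p,p}}{2})$ from \cite[Lemma~3]{BBCDG2006}, and monotonicity in $p$ from \cite[Corollary~4.4]{BushellEdmunds2012}, which together let us sandwich $\sin_{p,p}(\pi_{p,p}x)$ between a chord (from below, by concavity) and either the constant $1$ or a tangent/secant bound (from above). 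For the \emph{upper} bounds \ref{a3u}--\ref{a7u} the roles are reversed relative to Lemma~\ref{ajpositive}: we need to bound $\sin_{p,p}(\pi_{p,p}x)$ from \emph{above}, so the natural tool is the chord of $\sin_{2,2}(\pi x)=\sin(\pi x)$ — by concavity $\sin_{p,p}(\pi_{p,p}x)\le 1$ everywhere and, more usefully, $\sin_{p,p}(\pi_{p,p}x)$ is bounded by any secant line lying above the graph; using $p\le\frac65$ and monotonicity in $p$ one also has $\sin_{p,p}(\pi_{p,p}x)\le\sin_{6/5,6/5}(\pi_{6/5,6/5}x)$, and the value $\pi_{6/5,6/5}=\tfrac{10\pi}{3}$ is known in closed form.

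Concretely, for \ref{a1l} I would choose nodes in $[0,\tfrac12]$, verify via the hypergeometric representation $\sin_{p,q}^{-1}(y)=y\,\hypa(\tfrac1p,\tfrac1q;\tfrac1q+1;y^q)$ that $\sin_{6/5,6/5}(\pi_{6/5,6/5}x_j)\ge y_j$ at the nodes (hence $\sin_{p,p}(\pi_{p,p}x_j)\ge y_j$ for all $1<p\le\frac65$ by monotonicity), invoke concavity to get $\sin_{p,p}(\pi_{p,p}x)\ge\ell_j(x)$ on each subinterval, and then compute $a_1(p,p)\ge 2\sqrt2\sum_j\int_{x_j}^{x_{j+1}}\ell_j(x)\sin(\pi x)\,\mr dx$, checking the resulting rational/transcendental number exceeds $\tfrac{839}{1000}$. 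Since $\sin(\pi x)\ge0$ on $[0,\tfrac12]$ every piece contributes positively, so this is the cleanest of the four. For \ref{a3u}, \ref{a5u}, \ref{a7u} I would instead bound $\sin_{p,p}(\pi_{p,p}x)$ above: on intervals where $\sin(j\pi x)\ge0$ use an upper approximant (a secant line above the concave graph, or the constant $1$), and on intervals where $\sin(j\pi x)\le0$ use a lower approximant (a chord below the graph) so that the negative contribution is made as negative as possible — in both regions this pushes the integral up, giving a rigorous upper bound on $a_j(p,p)$. One must also handle the $p$-dependence on the ``bad-sign'' intervals; there, either bound $|\sin_{p,p}(\pi_{p,p}x)|\le1$ crudely, or use the $p=\frac65$ profile as the extreme case via monotonicity.

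The main obstacle is purely the bookkeeping of \emph{which} approximant to use on \emph{which} subinterval so that the net estimate goes the right way for every $j$ and every $p\in(1,\tfrac65]$ simultaneously, together with certifying the several scalar inequalities (the node inequalities $\sin_{6/5,6/5}^{-1}(y_j)<\pi_{6/5,6/5}x_j$ and the final comparisons against $\tfrac{839}{1000}$, $\tfrac{151}{500}$, $\tfrac{181}{1000}$, $\tfrac{13}{100}$) to the stated accuracy. These reduce to evaluating a Gauss hypergeometric function and a handful of integrals of $x\sin(j\pi x)$ and $\sin(j\pi x)$ — elementary but error-prone — which is exactly why the paper defers the reproducible computation to the appendix. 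I expect the proof to read as four short parallel paragraphs, each listing the nodes $\{x_j\}$, $\{y_j\}$, the piecewise approximant $\ell_j$, the sign pattern of $\sin(j\pi x)$, and the one-line arithmetic conclusion, mirroring the layout of Lemma~\ref{ajpositive}.
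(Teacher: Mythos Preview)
Your plan matches the paper's for \ref{a1l} and \ref{a3u}: chord-from-below at nodes certified via $\sin_{6/5,6/5}^{-1}$ for the lower bound on $a_1$, and the constant $1$ on the positive half-period together with a chord-from-below on the negative half-period for the upper bound on $a_3$. One slip to flag: your claim that $p\le\tfrac65$ gives $\sin_{p,p}(\pi_{p,p}x)\le\sin_{6/5,6/5}(\pi_{6/5,6/5}x)$ has the inequality reversed --- \cite[Corollary~4.4]{BushellEdmunds2012} says the profile \emph{increases} as $p$ decreases, so $p=\tfrac65$ furnishes a \emph{lower} envelope, not an upper one. This does not affect \ref{a1l} (where you use it the right way round) or \ref{a3u} (where only the bound $\le1$ is needed), but it would derail any attempt to use the $p=\tfrac65$ profile as an upper approximant elsewhere.

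For \ref{a5u} and \ref{a7u} the paper takes a shorter route than your full quadrature scheme. Rather than building approximants on every subinterval, it pairs adjacent subintervals on which $\sin(j\pi x)$ takes opposite signs and observes that, because $\sin_{p,p}(\pi_{p,p}x)$ is \emph{increasing in $x$} on $(0,\tfrac12)$, each such pair contributes negatively to $a_j$ --- no node values to certify and no $p$-dependence to manage. Only one positive-sign subinterval survives unpaired, and there the crude bound $\sin_{p,p}\le1$ gives $a_5<\tfrac{2\sqrt2}{5\pi}$ and $a_7<\tfrac{2\sqrt2}{7\pi}$ at once. Your approach would also succeed, but the monotonicity-in-$x$ cancellation is what lets the paper dispatch these two parts in essentially one line each.
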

\begin{proof} We proceed in a similar way as in the proof of Lemma~\ref{ajpositive}. Let $p$ be as in the hypothesis. 

\subsubsection*{Bound \ref{a1l}}  Set
\[
 \{x_j\}_{j=0}^3=\left\{0,\frac{31}{250}, \frac{101}{500},\frac12\right\} \quad \text{and} \quad
 \{y_j\}_{j=0}^5=\left\{0,\frac45, \frac{19}{20}, 1\right\}.
\]
Then
\begin{align*}
     \sin_{\frac{6}{5},\frac{6}{5}}^{-1}(y_1)&< \frac{129}{100}<\pi_{\frac{6}{5},\frac{6}{5}}x_1 \quad \text{and} \quad
     \sin_{\frac{6}{5},\frac{6}{6}}^{-1}(y_2)<\frac{211}{100}<\pi_{\frac{6}{5},\frac{6}{5}}x_2
\end{align*}
and so
  \begin{equation*}  
         \sin_{p,p}\left(\pi_{p,p} x_j\right)>y_j \qquad j=1,2.
 \end{equation*}
Let
\[
\left.\begin{aligned}
      \ell_j(x)&=\frac{y_{j+1}-y_j}{x_{j+1}-x_j} (x-x_j)+y_j  \\
      I_j&=2\sqrt{2}\int_{x_j}^{x_{j+1}} \ell_j(x) \sin(\pi x) \mr{d}x 
\end{aligned}\right\}
\qquad j=0,1,2.
\]
Then,
\[
     \sin_{p,p}(\pi_{p,p}x)> \ell_j(x) \qquad \qquad \forall x\in\left(x_j,x_{j+1} \right)\quad j=0,1,2. 
\]
Hence
\begin{equation*}
a_1(p,p)>I_0+I_1+I_2>\frac{839}{1000}.
\end{equation*} 

\subsubsection*{Bound \ref{a3u}}  Set
\[
 \{x_j\}_{j=0}^2=\left\{0,\frac13, \frac12\right\} \qquad \text{and} \qquad
 \{y_j\}_{j=0}^2=\left\{0,\frac{99}{100},  1\right\}.
\]
Then
\begin{equation*}
     \sin_{\frac{6}{5},\frac{6}{5}}^{-1}(y_1)< 3 <\pi_{\frac{6}{5},\frac{6}{5}}x_1 
\qquad \text{ and so } \qquad
         \sin_{p,p}\left(\pi_{p,p} x_1\right)>y_1.
 \end{equation*}
Let
\[
\begin{aligned}
      \ell_0(x)&=1 \\
      \ell_1(x)&=\frac{y_{2}-y_1}{x_{2}-x_1} (x-x_1)+y_1  \\
      I_j&=2\sqrt{2}\int_{x_j}^{x_{j+1}} \ell_j(x) \sin(3\pi x) \mr{d}x  \qquad j=0,1.
\end{aligned}
\]
Then,
\[
     \sin_{p,p}(\pi_{p,p}x)> \ell_1(x) \qquad \qquad \forall x\in\left(x_1,x_{2} \right)
\]
and hence
\begin{equation*}
a_3(p,p)<I_0+I_1<\frac{151}{500}.
\end{equation*} 

\subsubsection*{Bound \ref{a5u}}  Set
\[
 \{x_j\}_{j=0}^2=\left\{0,\frac15, \frac25, \frac12\right\}
\]
and let 
\[
\begin{aligned}
      I_j&=2\sqrt{2}\int_{x_j}^{x_{j+1}} \sin_{p,p}(\pi_{p,p}x) \sin(5\pi x) \mr{d}x  \qquad j=0,1\\
      I_2&=2\sqrt{2}\int_{x_2}^{x_{3}} \sin(5\pi x) \mr{d}x. 
\end{aligned}
\]
Then, $I_0+I_1<0$, so
\begin{equation*}
a_5(p,p)<I_2=\frac{2\sqrt{2}}{5\pi}<\frac{181}{1000}.
\end{equation*} 

\subsubsection*{Bound \ref{a7u}}  Set
\[
 \{x_j\}_{j=0}^4=\left\{0,\frac17, \frac27,\frac{5}{14},\frac37, \frac12\right\}
\]
and
\[
\begin{aligned}
  I_j&=2\sqrt{2}\int_{x_j}^{x_{j+1}} \sin_{p,p}(\pi_{p,p}x) \sin(7\pi
  x) \mr{d}x  \qquad j=0,1,3,4.\\
  I_2&=2\sqrt{2}\int_{x_2}^{x_{3}} \sin(7\pi x) \mr{d}x
\end{aligned}
\]
Then, $I_0+I_1<0$ and $I_3+I_4<0$, so
\begin{equation*}
a_7(p,p)<I_2=\frac{2\sqrt{2}}{7\pi}<\frac{13}{100}.
\end{equation*} 
\end{proof}

The following result fixes the proof of the claim made in \cite[\S4 Claim~2]{BBCDG2006} and improves the threshold of
invertibility determined in \cite[Theorem~4.5]{BushellEdmunds2012}.

\begin{thm} \label{fixingBBCDG}
There exists $1<p_3<\frac65$, such that
\begin{equation} \label{proofto1211}
     \pi_{p,p}< \frac{[a_1(p,p)-a_3(p,p)-a_5(p,p)-a_7(p,p)]\pi^2}{2\sqrt{2}\left( \frac{\pi^2}{8}-1- \frac19-\frac{1}{25}-\frac{1}{49}  \right)} \qquad  \forall  p\in\left(p_3, \frac{6}{5}\right).
\end{equation}
The family $\mc{S}$ is a Schauder basis of $L^r(0,1)$ for all $p_3<p=q<\frac{6}{5}$ and $r>1$.
\end{thm}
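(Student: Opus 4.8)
The plan is to deduce Theorem~\ref{fixingBBCDG} from the version of \eqref{trick1} packaged in Theorem~\ref{impro_implicit}-\ref{aimplicit}, but using sharper bounds on the low-order coefficients. First I would observe that, by splitting off the first few terms in the tail estimate \eqref{coefaj}, one has the refinement of \eqref{sum1}:
\[
      \sum_{j\not\in\{1,3,5,7\}} |a_j(p,p)| \leq \frac{2\sqrt{2}\,\pi_{p,p}}{\pi^2}\left(\frac{\pi^2}{8}-1-\frac19-\frac1{25}-\frac1{49}\right),
\]
which follows since $a_j=0$ for even $j$ and $\sum_{j\ \mathrm{odd}} j^{-2}=\pi^2/8$. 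Then the condition $\sum_{j=3}^\infty|a_j|<a_1$ in \eqref{trick1} is implied by
\[
      a_3(p,p)+a_5(p,p)+a_7(p,p)+\frac{2\sqrt{2}\,\pi_{p,p}}{\pi^2}\left(\frac{\pi^2}{8}-1-\frac19-\frac1{25}-\frac1{49}\right) < a_1(p,p),
\]
provided $a_3,a_5,a_7$ are nonnegative so that $|a_j|=a_j$ for $j=3,5,7$. Rearranging this inequality is exactly \eqref{proofto1211}.

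Next I would verify that the hypotheses needed are available on the interval $(1,\tfrac65)$. By Lemma~\ref{ajpositive}\ref{a3l}, \ref{a5l}, \ref{a7l}, the coefficients $a_3(p,p),a_5(p,p),a_7(p,p)$ are strictly positive for all $1<p\leq\frac65$ (using that $\frac43>\frac65$ for the $a_3$ bound), so the absolute values may be dropped as above. Hence \eqref{proofto1211}, once established, together with \eqref{trick1} gives $A,A^{-1}\in\mc{B}(L^r)$ for every $r>1$ at each $p$ where \eqref{proofto1211} holds; and by the discussion following Theorem~\ref{likernelandspan} (together with the standard fact that $\{e_n\}$ is a Schauder basis of $L^r$ and that $\mc{S}=A\{e_n\}$), this is equivalent to $\mc{S}$ being a Schauder basis of $L^r(0,1)$.

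It then remains to show that \eqref{proofto1211} actually holds on a nonempty sub-interval $(p_3,\tfrac65)$. Here I would feed in the explicit numerical bounds of Lemma~\ref{ajbounds}: $a_1(p,p)>\frac{839}{1000}$, $a_3(p,p)<\frac{151}{500}$, $a_5(p,p)<\frac{181}{1000}$, $a_7(p,p)<\frac{13}{100}$, uniformly for $1<p\leq\frac65$. Substituting these into the right-hand side of \eqref{proofto1211} gives a fixed positive lower bound $c_0$ for that right-hand side, independent of $p$ in $(1,\tfrac65]$; an elementary arithmetic check (the numerator is at least $\frac{839}{1000}-\frac{151}{500}-\frac{181}{1000}-\frac{13}{100}$, which is positive, divided by the positive constant $2\sqrt2(\tfrac{\pi^2}{8}-1-\tfrac19-\tfrac1{25}-\tfrac1{49})$) confirms $c_0>0$. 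On the other hand $\pi_{p,p}=\pi_{p,p}(p)$ is continuous and strictly decreasing in $p$ with $\pi_{p,p}\to\infty$ as $p\to1^+$ and $\pi_{p,p}<c_0$ once $p$ is close enough to $\frac65$ (indeed one checks $\pi_{\frac65,\frac65}=\frac{10\pi}{3}$ is safely below $c_0$). Therefore there is a threshold $p_3\in(1,\tfrac65)$, namely the unique solution of $\pi_{p,p}=c_0$ (or any value at which the sharper state-dependent inequality \eqref{proofto1211} first becomes true), such that \eqref{proofto1211} holds for all $p\in(p_3,\tfrac65)$. Combined with the previous paragraph, $\mc{S}$ is a Schauder basis of $L^r(0,1)$ for all such $p$ and all $r>1$.

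The main obstacle is not conceptual but quantitative: one must be sure that the numerical constants of Lemma~\ref{ajbounds} are strong enough that the numerator $a_1-a_3-a_5-a_7$ stays comfortably positive and that the resulting $c_0$ exceeds $\pi_{\frac65,\frac65}=\frac{10\pi}{3}$, so that the interval $(p_3,\tfrac65)$ is genuinely nonempty; this is exactly why the piecewise-linear quadrature bounds in Lemma~\ref{ajbounds} had to be sharpened relative to the crude bound $|a_j|\le \frac{2\sqrt2\,\pi_{p,p}}{j^2\pi^2}$, which is too weak near $p=1$. The actual value $p_3\approx 1.087063$ quoted in the introduction would be obtained by solving \eqref{proofto1211} numerically with the honest (not merely bounded) coefficients, but for the existence statement the uniform bounds above suffice.
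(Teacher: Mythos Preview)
Your proposal is correct and follows essentially the same route as the paper's proof: both combine the tail estimate from \eqref{coefaj} with the positivity of $a_3,a_5,a_7$ from Lemma~\ref{ajpositive} and the numerical bounds of Lemma~\ref{ajbounds} to verify \eqref{proofto1211} at $p=\tfrac65$, then invoke continuity together with $\pi_{p,p}\to\infty$ as $p\to1$ to obtain the threshold $p_3$, and finally conclude via \eqref{trick1}. The only cosmetic difference is ordering (you derive the implication \eqref{proofto1211}$\Rightarrow$basis first and then establish \eqref{proofto1211}, whereas the paper does the reverse) and that you use the uniform lower bound $c_0$ on the right-hand side together with strict monotonicity of $\pi_{p,p}$, while the paper appeals directly to the Intermediate Value Theorem after checking $\pi_{\frac65,\frac65}=\tfrac{10\pi}{3}<12<\text{RHS}$.
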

\begin{proof}
Both sides of \eqref{proofto1211} are continuous functions of the parameter $p>1$. 
The right hand side is bounded. The left side is decreasing as $p$ increases and $\pi_{p,p}\to \infty$ as $p\to 1$. By virtue of Lemma~\ref{ajbounds},
\[
    \pi_{\frac65,\frac65}= \frac{10\pi}{3}<12<\frac{\left(a_1(\frac65,\frac65)-a_3(\frac65,\frac65)-a_5(\frac65,\frac65)-a_7(\frac65,\frac65)\right) \pi^2}{2\sqrt{2}\left( \frac{\pi^2}{8}-1-\frac19-\frac{1}{25}-\frac{1}{49}  \right)}.
\]
Hence the first statement is ensured as a consequence of the Intermediate Value Theorem.

From \eqref{coefaj}, it follows that
\[
     \sum_{j\not\in\{1,3,5,7\}} |a_j(p,p)|< \frac{2\sqrt{2}\pi_{p,p}}{\pi^2}\left( \frac{\pi^2}{8}-1-\frac19-\frac{1}{25}-\frac{1}{49}  \right)
\]
for all $p_3<p<\frac{6}{5}$. Lemma~\ref{ajpositive} guarantees positivity of $a_j$ for $j=3,5,7$. Then, by re-arranging this inequality, the second statement becomes a direct consequence of  \eqref{trick1}. 
\end{proof}

A sharp numerical approximation of the solution of the equation with equality in \eqref{proofto1211} gives $p_3\approx1.087063<\frac{12}{11}$. See Figure~\ref{impro_fig_p=q}.

\section{The thresholds for invertibility and the regions of improvement} \label{nume}

If sharp bounds on the first few Fourier coefficients $a_j(p,q)$ are at hand, the approach employed above for the proof of Theorem~\ref{fixingBBCDG} can also be combined with the criteria \eqref{trick2} or \eqref{trick3}.  A natural question is whether
this would lead to a positive answer to the question of invertibility for $A$, whenever
\[
      \sum_{k=3}^\infty a_j \geq a_1.
\]
In the case of \eqref{trick2}, we see below that this is indeed the case. The key statement is summarised as follows.

\begin{pro} \label{beyond2}
  Let $r=2$ and $5\leq k\not \equiv_20$. Suppose that
\begin{enumerate}
 \item $a_3>0$, $a_9>0$ and $a_j\geq 0$ for all other $5\leq j \leq k$.
\item $a_3(a_1+a_9)> 4a_9a_1$.
\end{enumerate}
If
\begin{equation} \label{verygeneral}
     \pi_{p,q}<\left(a_1+a_9-\sum_{\substack{3\leq j\leq k \\ j\not\in \{1,9\}}} a_j\right) \frac{\pi^2}{2\sqrt{2}\left(\frac{\pi^2}{8}-\sum_{\substack{1\leq j\leq k \\ j\not\equiv_2 0}}^k \frac{1}{j^2} \right)},
\end{equation}
then $A$ is invertible.
\end{pro}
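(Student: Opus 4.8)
The plan is to apply Corollary~\ref{main_1} with a careful bookkeeping of the tail $\sum_{j\not\in\{1,9\}}|a_j|$, exactly mirroring the proof of Theorem~\ref{fixingBBCDG} but now feeding into \eqref{trick2} instead of \eqref{trick1}. First I would observe that hypothesis~(1) together with Lemma~\ref{ajint} gives $0<\frac{a_3}{a_1}+\frac{a_9}{a_1}<1$, hence $\left(\frac{a_3}{a_1},\frac{a_9}{a_1}\right)\in\sf{T}\cap(0,1)^2$; and hypothesis~(2) is precisely the condition $|\alpha(\beta+1)|\geq|4\beta|$ defining $\sf{R}_2$ when $\alpha,\beta>0$, so in fact $\left(\frac{a_3}{a_1},\frac{a_9}{a_1}\right)\in\sf{R}_2\cap\sf{T}$. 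This puts us in the regime where $\|B^{-1}\|^{-1}=a_1-|a_3|+a_9 = a_1+a_9-a_3$, i.e.\ the first of the two hypotheses on the left of \eqref{trick2} holds.

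Next I would estimate the tail. Split $\sum_{j\not\in\{1,9\}}|a_j|$ into the finitely many terms with $3\leq j\leq k$, $j\not\equiv_2 0$ — which by hypothesis~(1) are all nonnegative, so $|a_j|=a_j$ for these — and the remaining terms with $j>k$. For the finite part we simply keep $\sum_{\substack{3\leq j\leq k\\ j\not\in\{1,9\}}}a_j$. For the infinite tail $j>k$ (all odd), the bound \eqref{coefaj} gives $|a_j|\leq\frac{2\sqrt2\,\pi_{p,q}}{j^2\pi^2}$, whence
\[
\sum_{\substack{j>k\\ j\not\equiv_2 0}}|a_j|\leq\frac{2\sqrt2\,\pi_{p,q}}{\pi^2}\left(\frac{\pi^2}{8}-\sum_{\substack{1\leq j\leq k\\ j\not\equiv_2 0}}\frac{1}{j^2}\right),
\]
using $\sum_{j\ \mathrm{odd}}j^{-2}=\pi^2/8$. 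Adding the two pieces,
\[
\sum_{j\not\in\{1,9\}}|a_j|\leq\sum_{\substack{3\leq j\leq k\\ j\not\in\{1,9\}}}a_j+\frac{2\sqrt2\,\pi_{p,q}}{\pi^2}\left(\frac{\pi^2}{8}-\sum_{\substack{1\leq j\leq k\\ j\not\equiv_2 0}}\frac{1}{j^2}\right).
\]
The hypothesis \eqref{verygeneral} rearranges to say precisely that the right-hand side above is strictly less than $a_1+a_9-\sum_{\substack{3\leq j\leq k\\ j\not\in\{1,9\}}}a_j+\left(a_1+a_9-\sum_{\substack{3\leq j\leq k\\ j\not\in\{1,9\}}}a_j\right)=\dots$; more carefully, moving the finite sum over, \eqref{verygeneral} is equivalent to
\[
\sum_{\substack{3\leq j\leq k\\ j\not\in\{1,9\}}}a_j+\frac{2\sqrt2\,\pi_{p,q}}{\pi^2}\left(\frac{\pi^2}{8}-\sum_{\substack{1\leq j\leq k\\ j\not\equiv_2 0}}\frac{1}{j^2}\right)<a_1+a_9,
\]
which together with the tail estimate gives $\sum_{j\not\in\{1,9\}}|a_j|<a_1+a_9$. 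Hence the second hypothesis on the left of \eqref{trick2} also holds, and Corollary~\ref{main_1} yields that $A,A^{-1}\in\mc{B}(L^2)$.

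The only genuinely delicate point is the verification that the $\sf{R}_2\cap\sf{T}$ membership is correctly read off from the hypotheses: one must check that $a_3>0$, $a_9>0$ and $a_3(a_1+a_9)>4a_9a_1$ indeed place $(\alpha,\beta)=(\frac{a_3}{a_1},\frac{a_9}{a_1})$ inside $\sf{R}_2$ and not merely on its boundary, and that the resulting formula $\|B^{-1}\|^{-1}=1+\beta-|\alpha|$ from Lemma~\ref{toep} (the $\sf{R}_2$ case) is the one applicable — scaling back by $a_1$ gives $\|B^{-1}\|^{-1}=a_1+a_9-a_3$. The strict inequality in hypothesis~(2) is exactly what ensures we are in the interior of $\sf{R}_2$, so this causes no trouble; everything else is the routine tail bookkeeping above. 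Note we do not claim a Riesz-constant bound here, only invertibility, so no triangle-inequality estimate on $\|A\|$ is needed.
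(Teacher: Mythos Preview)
Your approach is essentially identical to the paper's: bound the tail $\sum_{j>k}|a_j|$ via \eqref{coefaj} and the identity $\sum_{j\ \mathrm{odd}}j^{-2}=\pi^2/8$, combine with the nonnegativity of the $a_j$ for $3\leq j\leq k$, and feed the resulting inequality $\sum_{j\not\in\{1,9\}}|a_j|<a_1+a_9$ into \eqref{trick2}. The paper's own proof is terser and leaves the verification of $\left(\frac{a_3}{a_1},\frac{a_9}{a_1}\right)\in\sf{R}_2\cap\sf{T}$ implicit, whereas you spell it out; that extra care is welcome.

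One small slip in your justification of the $\sf{T}$ membership: Lemma~\ref{ajint} gives $a_3<a_1$ and $a_9<a_1$ \emph{separately}, not $a_3+a_9<a_1$, so the claim $\frac{a_3}{a_1}+\frac{a_9}{a_1}<1$ does not follow from what you cite. Fortunately this is harmless: what you actually need is $(\alpha,\beta)\in(0,1)^2$, and $(0,1)^2\subset\sf{T}$ directly (since $\beta<1$, $\beta+\alpha+1>0$ trivially, and $\beta-\alpha+1>1-\alpha>0$). This is exactly how the paper argues in the proof of Theorem~\ref{impro_implicit}.
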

\begin{proof}
Assume that the hypotheses are satisfied. The combination of \eqref{coefaj} and \eqref{verygeneral} gives
\[
     \sum_{j=k+1}^{\infty} |a_j|\leq \frac{2\sqrt{2}\pi_{p,q}}{\pi^2}\left( \frac{\pi^2}{8}- \sum_{\substack{1\leq j\leq k \\ j\not\equiv_2 0}}^k \frac{1}{j^2}  \right)<a_1+a_9-\sum_{\substack{3\leq j\leq k \\ j\not\in \{1,9\}}}a_j.
\]
Then 
\[
       \sum_{j\not\in\{1,9\}}|a_j|=\sum_{\substack{3\leq j\leq k \\j\not\in\{1,9\} }}a_j+\sum_{\substack{ j>k \\ j\not\in\{1,9\}}}|a_j|<a_1+a_9
\] 
and so the conclusion follows from \eqref{trick2}. 
\end{proof}

We now discuss the connection between the different statements
established in the previous sections with those of the papers
\cite{BBCDG2006}, \cite{BushellEdmunds2012} and
\cite{EdmundsGurkaLang2012}. 
For this purpose we consider various accurate
approximations of $a_j$ and $\sum a_j$. These approximations are based on
the next explicit formulae:
\[
      \pi_{p,q}=\frac{2 \on{B}\left(\frac1q,\frac{p-1}{p}\right)}{q}=\frac{2\ \Gamma\left(\frac{p-1}{p}\right)
                  \Gamma\left(\frac1q\right)}{q\ \Gamma\left(\frac{p-1}{p}+\frac1q\right)}
\]
and
\[
    \begin{aligned} 
     a_j(p,q)&=\frac{2\sqrt{2}}{j\pi} \int_0^1 \cos\left(\frac{j\pi x}{\pi_{p,q}}
                  \ \hypa\! \! \left(\frac1p,\frac1q;1+\frac1q;x^q\right)\right) \mr{d}x \\
                  &=\frac{2\sqrt{2}}{j\pi}\int_0^1 \cos\left( \frac{j\pi}{2} \ \mc{I}\! \!\left(\frac1q,\frac{p-1}{p};x^q\right)   \right) \mr{d}x.
                  \end{aligned}
\]
Here $\mc{I}$ is the incomplete beta function,
$\on{B}$ is the beta function and $\Gamma$ is the gamma function. Moreover, by considering exactly the steps described in \cite{BushellEdmunds2012} for the proof of \cite[(4.15)]{BushellEdmunds2012},
it follows that
\[
    \begin{aligned} 
     \sum_{j=1}^\infty a_j(p,q)&=\frac{\sqrt{2}}{\pi} \int_0^1 \log \left[\cot \left(\frac{\pi x}{2 \pi_{p,q}}
                  \ \hypa\! \! \left(\frac1p,\frac1q;1+\frac1q;x^q\right)\right) \right]\mr{d}x \\
                  &=\frac{\sqrt{2}}{\pi}\int_0^1 \log\left[\cot \left( \frac{\pi}{4} \ \mc{I}\! \!\left(\frac1q,\frac{p-1}{p};x^q\right)   \right)\right] \mr{d}x.
                  \end{aligned}
\]

\begin{figure}
\centerline{
\includegraphics[height=6.4cm, angle=0]{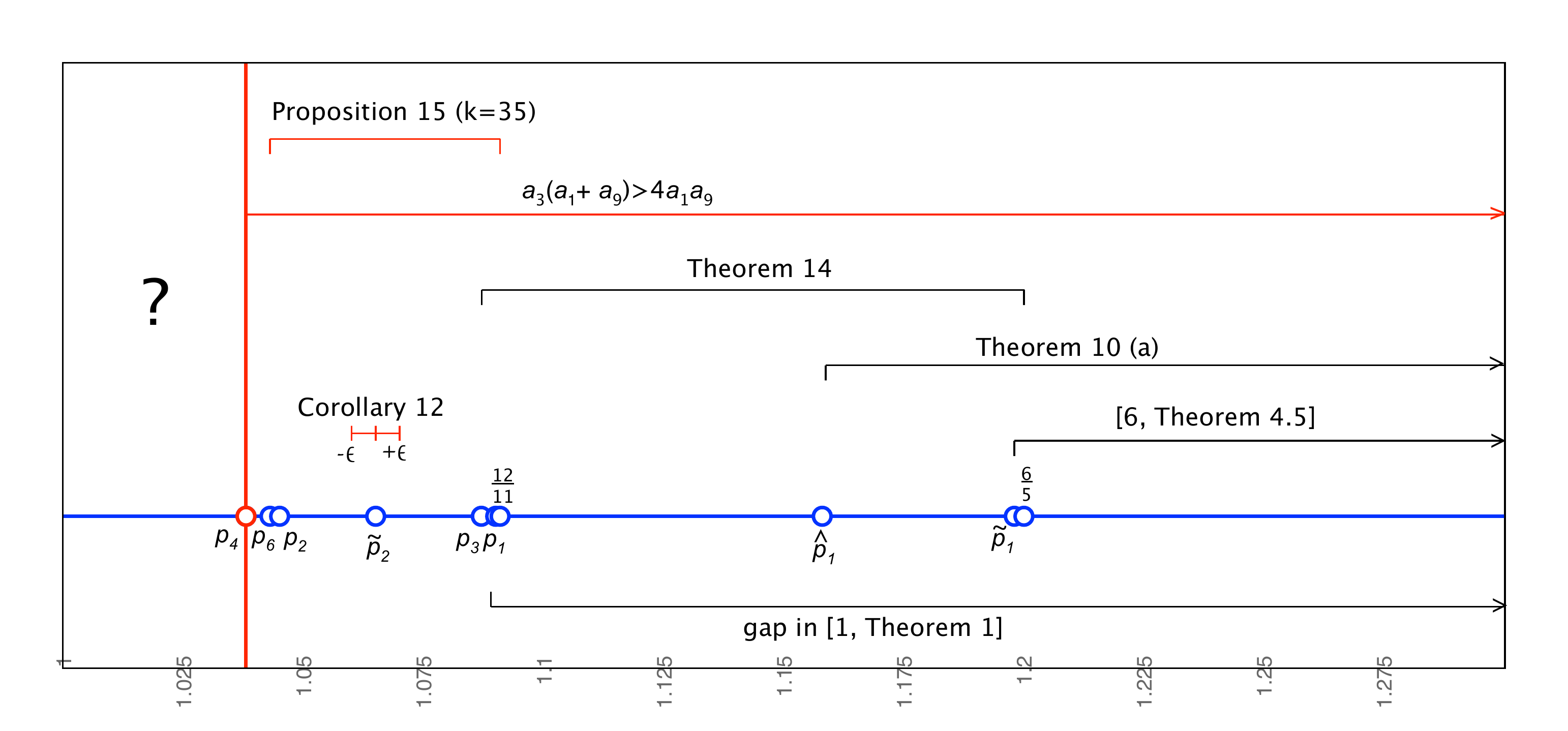}}
\caption{Relation between the various statements of this paper with those of \cite{BBCDG2006} and \cite{BushellEdmunds2012}, for the case $p=q$. The positions of $p_1$, $\tilde{p}_2$ and the value of $\varepsilon$ are set only for illustration purposes, as we are only certain that $p_2<\tilde{p}_2<p_3$.  Black indicates relevance to the general case $r>1$ while red indicates relevance for
the case $r=2$. \label{impro_fig_p=q}}
\end{figure}

Let us begin with the case of equal indices.  See Figure~\ref{impro_fig_p=q}. As mentioned in the introduction, 
\[
      \sum_{k=3}^\infty a_j(p_2,p_2)=a_1(p_2,p_2)
\]
for $p_2\approx 1.043989$. The condition $a_3(p,p)(a_1(p,p)+a_9(p,p))> 4a_9(p,p)a_1(p,p)$ is fulfilled for all $p_4<p<\frac{12}{11}$ where $p_4\approx 1.038537$. The Fourier coefficients $a_j(p,p)\geq 0$ for all $1\leq j \leq 35$ whenever $1<p<\frac{12}{11}$. Remarkably we need to get to $k=35$, for a numerical verification of the conditions of Proposition~\ref{beyond2}
allowing $p<p_2$. Indeed we remark the following. 
\begin{enumerate}
 \item For $k=3,...,33$, the condition \eqref{verygeneral} hold true only for $p_5<p<\frac{12}{11}$ where $p_5\geq 1.044573>p_2$.
\item For $k=35$ the condition \eqref{verygeneral} does hold true for $p_6<p<\frac{12}{11}$ where $p_6\approx 1.043917<p_2$.
\end{enumerate}
This indicates that that the threshold for invertibility of $A$ in the
Hilbert space setting for $p=q$ is at least $p_6$.

Now we examine the general case.
The graphs shown in Figures~\ref{th10ab} and  \ref{th10c} correspond
to regions in the $(p,q)$-plane near $(p,q)=(1,1)$.  Curves on Figure~\ref{th10ab} that are in red are relevant only to the Hilbert space setting 
$r=2$. Black curves pertain to $r>1$. 

Figure~\ref{th10ab}-\emph{(a)} and a blowup shown in Figure~\ref{th10ab}-\emph{(b)}, have two solid (black) lines. One that shows  the limit of applicability of
Theorem~\ref{impro_implicit}-\ref{aimplicit} and one that shows the limit of applicability of the
result of \cite{EdmundsGurkaLang2012}. The dashed line indicates where
\eqref{break2} occurs. To the left of that curve \eqref{trick1} is not applicable.
There are two filled regions of different colours in \emph{(a)}, which indicate where
$a_3(a_1+a_9)< 4a_1a_9$  and where $a_j<0$ for $j=3,9$. 
Proposition~\ref{beyond2} is not applicable in the union of these regions. We also show the lines where $a_3=0$ and $a_9=0$. The latter forms part of the
boundary of this union. The solid red line corresponding to the limit of applicability of Theorem~\ref{impro_implicit}-\ref{bimplicit} 
is also included in  Figure~\ref{th10ab}-\emph{(a)--(d)}. 
To the right of that line, in
the white area, we know that $A$ is invertible for $r=2$. The blowup
in Figure~\ref{th10ab}-\emph{(b)} clearly shows the gap between
Theorem~\ref{impro_implicit}-\ref{aimplicit} and
Theorem~\ref{impro_implicit}-\ref{bimplicit} in this $r=2$ setting.

Certainly $p=q=2$ is a point of intersection for all curves where
$a_j=0$ for $j>1$. These curves are shown in Figure~\ref{th10ab}-\emph{(c)}
also for $j=5$ and $j=7$.  In this figure, we also include 
the boundary of the region where $a_3(a_1+a_9)< 4a_1a_9$ and the region where $a_j<0$ now for $j=3,5,7,9$.
Note that the curves for $a_7=0$ and $a_9=0$ form part of the boundary of the latter. Comparing \emph{(a)} and \emph{(c)}, the new line that cuts
the $p$ axis at $p\approx 1.1$ corresponds to the limit of where
Proposition~\ref{beyond2} for $k=7$ is applicable (for $p$ to the
right of this line).
The gap between the two red lines (case $r=2$) indicates that
Proposition~\ref{beyond2} can significantly improve the threshold for basisness
with respect to a direct application of
Theorem~\ref{impro_implicit}-\ref{bimplicit}. 

As we increase $k$, the boundary of the corresponding region moves to
the left, see the blowups in Figure~\ref{th10ab}-\emph{(d)} and \emph{(e)}. The two 
further curves in red located very close to the vertical axis, 
correspond to the precise value of the parameter $k$ where
Proposition~\ref{beyond2} allows a proof of invertibility for the
change of coordinates which includes the break made by \eqref{break2}. For
$k<35$ the region does not include the dashed black line, for $k=35$
it does include this line.   
The region shown in blue indicates a possible place where
Corollary~\ref{main_1} may still apply, but further investigation in
this respect is needed.

Figure~\ref{th10c} concerns the statement of
Theorem~\ref{impro_implicit}-\ref{cimplicit}. The small wedge shown in
green is the only place where the former is applicable. As it turns,
it appears that the conditions of Corollary~\ref{main_2}  prevent it
to be useful for determining invertibility of $A$ in a neighbourhood
of $(p,q)=(1,1)$. However in the region shown in green, the upper
bound on the Riesz constant consequence of \eqref{trick3} is sharper
than that obtained from \eqref{trick1}.

\appendix

\section{The shape of $\sin_{p,p}$ as $p\to1$} \label{ap1}

Part of the difficulties for a proof of basisness for the family $\mc{S}$ in the regime $p=q\to1$ 
has to do the fact that the Fourier coefficients of $s_1$ approach those of  the function 
$\on{sgn} (\sin(\pi x))$. In this appendix we show that, indeed 
\begin{equation}    \label{regimpto1}
        \lim_{p\to 1} \left(\max_{0\leq x \leq 1}\left|   s_1(x)-\on{sgn} (\sin(\pi x))   \right|\right)
       =0.
\end{equation}

\begin{proof}
Note that 
\[
      \frac{\mr{d}^n}{\mr{d}y^n}s_1^{-1}(y)> 0 \qquad \qquad \forall 0<y<1, \quad n=0,1,2.
\]

Let $y_1(p)\in(0,1)$ be the (unique) value, such that
\[
       \frac{\mr{d}}{\mr{d}y}s_1^{-1}(y_1(p))= \frac{1}{(1-y_1(p)^p)^{1/p}}=\pi_{p,p}.
\] 
Then
\[
     y_1(p)=\left(1-\frac1{\pi_{p,p}^p}\right)^{1/p} \to 1 \qquad p\to 1.
\]
Let 
\[
h(t)=1-\frac{t}{y_1(p)}
\]
be the line passing through the points $(0,1)$ and $(y_1(p),0)$. There exists a unique value $y_2(p)\in(0,y_1(p))$ such that
\[
       \frac{1}{\pi_{p,p}}\frac{1}{(1-y_2(p)^{p})^{1/p}}=h(y_2(p)).
\]
This value is unique because of monotonicity of both sides of this equality, and it exists by bisection.
As all the functions involved are continuous in $p$,  then also $y_2(p)$ is continuous in the parameter  $p$. Moreover,
\[
      y_2(p)\to 1 \qquad p\to 1.
\]
Indeed, by clearing the equation defining $y_2(p)$, we get
\[
      \left( 1-\frac{y_2(p)}{y_1(p)}    \right)^{p}\left( 1-y_2(p)^p  \right)=\frac{1}{\pi_{p,p}^p}.
\]
The right hand side, and thus the left hand side, approach 0 as $p\to 1$. Then, one (and hence both) of the two terms multiplying on the left should approach 0. 

Let $\mf{P}_p$ be the polygon which has as vertices (ordered clockwise)
\begin{gather*}
     v_1(p)=\left(0,\frac{1}{\pi_{p,p}}\right),\quad v_2(p)=(y_2(p),h(y_2(p))), 
 \\     v_3(p)=(y_1(p),1),\quad v_4(p)=(y_1(p),0) \quad \text{and} \qquad v_5= (0,0).
\end{gather*}
As
\begin{gather*}
     v_1(p)\to (0,0)=v_5,\quad v_2(p)\to (1,0), \\
      v_3(p)\to (1,1) \quad \text{and} \quad  v_4(p)\to (1,0);
\end{gather*}
$\mf{B}_p\to ([0,1]\times\{0\}) \cup (\{1\}\times [0,1])$ in Hausdorff 
distance. Then the area of $\mf{P}_p$ approaches 0 as $p\to 1$. Moreover,  $\mf{P}_p$ covers the graph of
\[
    \frac{1}{\pi_{p,p}}\frac{1}{(1-t^{p})^{1/p}}
\] 
for $0<t<y_1(p)$. Thus
\[
    x_1(p)= s_1^{-1}(y_1(p))=\frac{1}{\pi_{p,p}}\int_0^{y_1(p)} \frac{\mr{d} t}{(1-t^p)^{1/p}} \to 0 \qquad p\to 1.
\]
Hence, there is a point $(x_1(p),y_1(p))$ on the graph of $s_1(x)$ such that $0<x_1(p)<1/2$ and
\[
    (x_1(p),y_1(p))\to (0,1).
\]  
The proof of \eqref{regimpto1} is completed from the fact that, as $s_1(x)$ is concave (because its inverse function is convex), the piecewise linear interpolant of $s_1(x)$ for the family of nodes $\{0,x_1(p),1/2\}$ has a graph below that of $s_1(x)$.  
\end{proof}

\section{Basic computer codes} \label{ap2}

The following computer codes written in the open source languages Octave and Python can be used to verify any of the numerical estimations presented in this paper.

\subsection{Octave}
Function for computing $a_j$ with 10-digits precision.
\begin{verbatim}
# -- Function File: [a,err,np]=apq(k,p,q)  
# a is the kth Fourier coefficient of the p,q sine function
# err is the residual
# np number of quadrature points
#
function [a,err,np]=apq(k,p,q)
if mod(k,2)==0, 
   disp('Error: k should be odd');
   return; 
end
[I, err, np]=quadcc(@(y) cos(k*pi*betainc(y.^q,1/q,(p-1)/p)/2),0,1,1e-10);
a=I*2*sqrt(2)/k/pi;
\end{verbatim}

Function for computing $\sum_{j=1}^\infty a_j$ with 10-digits precision.
\begin{verbatim}
# -- Function File: [s,err,np]=apqsum(k,p,q)  
# s is the sum of the Fourier coefficient of the p,q sine function
# err is the residual
# np number of quadrature points
#
function [s,err,np]=apqsum(p,q)
[I, err, np]=quadcc(@(y) log(cot(pi*betainc(y.^q,1/q,(p-1)/p)/4)),0,1,1e-10);
s=I*sqrt(2)/pi;
\end{verbatim}

\subsection{Python - mpmath}
Function for computing $a_j$ with variable precision.
\begin{verbatim}
def a(k,p,q):
    """ Computes the kth Fourier coefficient of the p,q sine function. 
    Returns coefficient and residual.
        >>> from sympy.mpmath import *
        >>> mp.dps = 25; mp.pretty = True
        >>> a(1,mpf(12)/11,mpf(12)/11)
        >>> (0.8877665848468607372062737, 1.0e-59)
    """
    if isint(fraction(k,2)): 
       apq=0;
       E=0;
       return apq,E
    f= lambda x:cos(k*pi*betainc(1/q,(p-1)/p,0,x**q,regularized=True)/2);
    (I,E)=quad(f,[0,1],error=True,maxdegree=10);
    apq=I*2*sqrt(2)/k/pi;
    return apq,E
\end{verbatim}

Function for computing $\sum_{j=1}^\infty a_j$ with variable precision.
\begin{verbatim}
def suma(p,q):
    """ Computes the sum of the Fourier coefficient of the p,q sine function. 
    Returns sum and residual.
        >>> from sympy.mpmath import *
        >>> mp.dps = 25; mp.pretty = True
        >>> suma(mpf(12)/11,mpf(12)/11)
        >>> (1.48634943002852603038783, 1.0e-56)
    """    
    f= lambda x:log(cot(pi*betainc(1/q,(p-1)/p,0,x**q,regularized=True)/4));
    (I,E)=quad(f,[0,1],error=True,maxdegree=10);
    sumapq=I*sqrt(2)/pi;
    return sumapq,E
\end{verbatim}

\section*{Acknowledgements}
The authors wish to express their gratitude to Paul Binding who suggested this problem a few years back. They are also kindly grateful with Stefania Marcantognini for her insightful comments during the preparation of this manuscript. We acknowledge support by the British Engineering and Physical Sciences Research Council  (EP/I00761X/1), the Research Support Fund of the Edinburgh Mathematical Society and the Instituto Venezolano de Investigaciones Cient{\'\i}ficas.
\bibliographystyle{siam}

\begin{figure}
\centerline{\emph{(a)} \hspace{6cm} \emph{(b)}}
\centerline{
\includegraphics[height=6cm, angle=0]{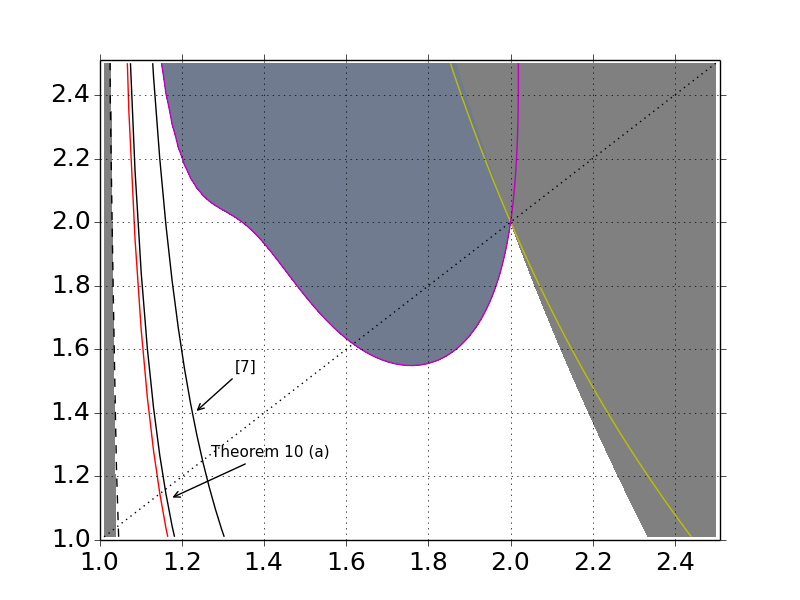} \hspace{-6mm}
\includegraphics[height=6cm, angle=0]{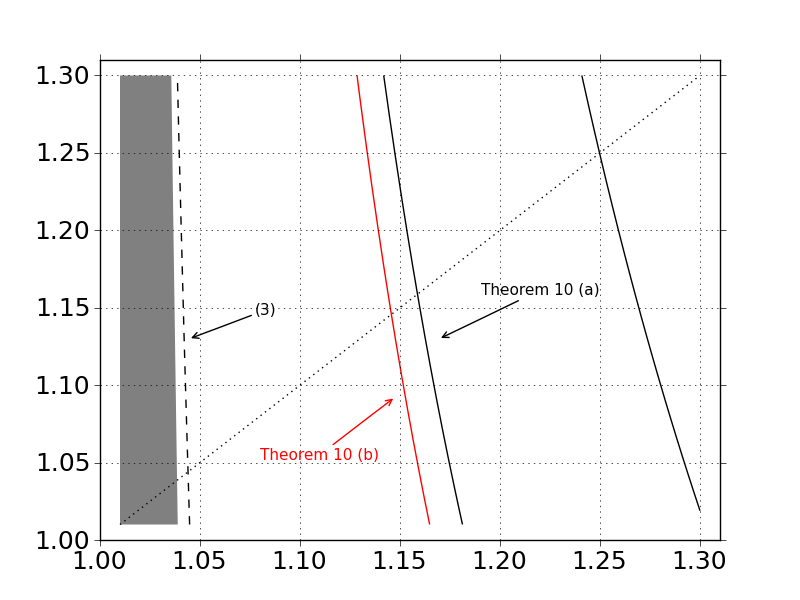}}
\hspace{-6cm}\emph{(c)} 

\centerline{
\includegraphics[height=6cm, angle=0]{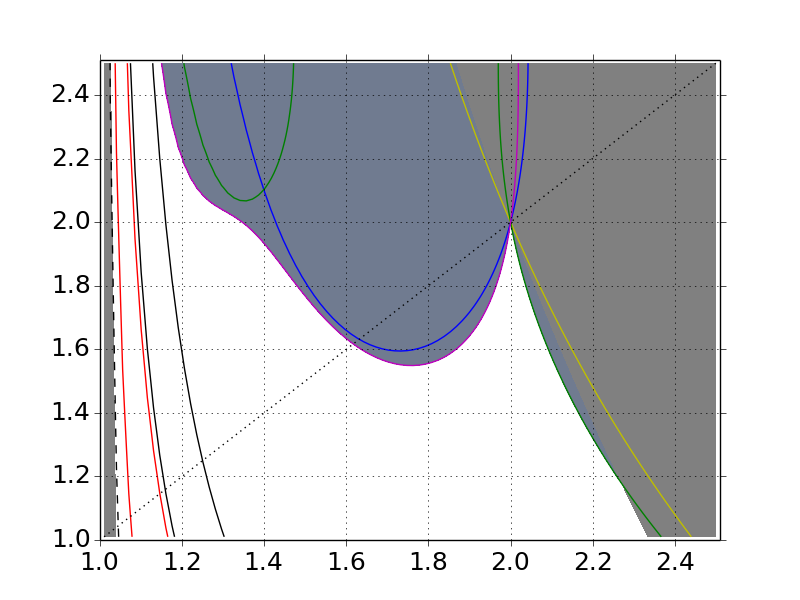}  \hspace{-6mm}
\includegraphics[height=6cm, angle=0]{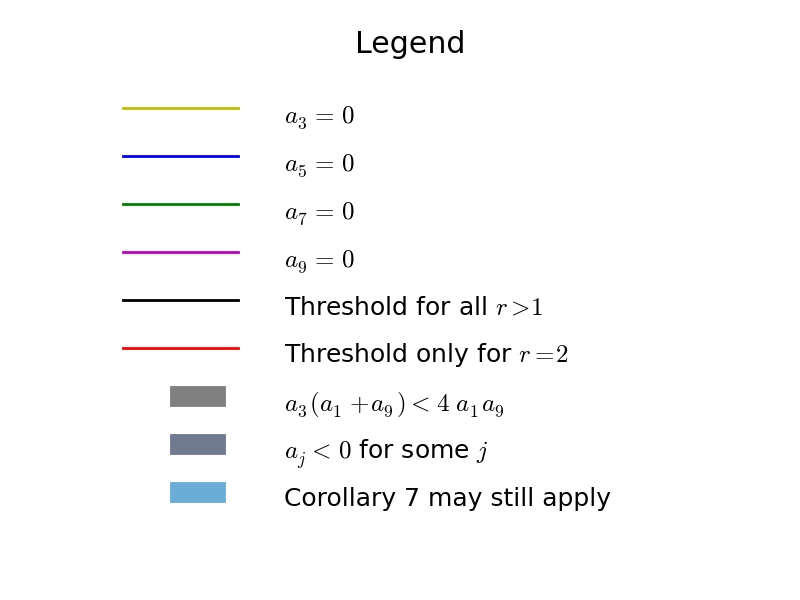}}
\centerline{
\emph{(d)} \hspace{6cm} \emph{(e)}}
\centerline{
\includegraphics[height=6cm, angle=0]{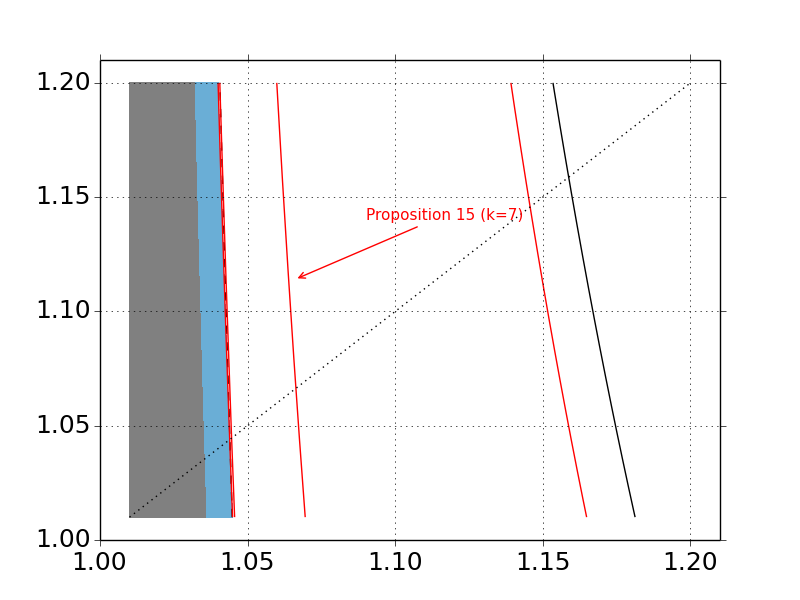} \hspace{-6mm}
\includegraphics[height=6cm, angle=0]{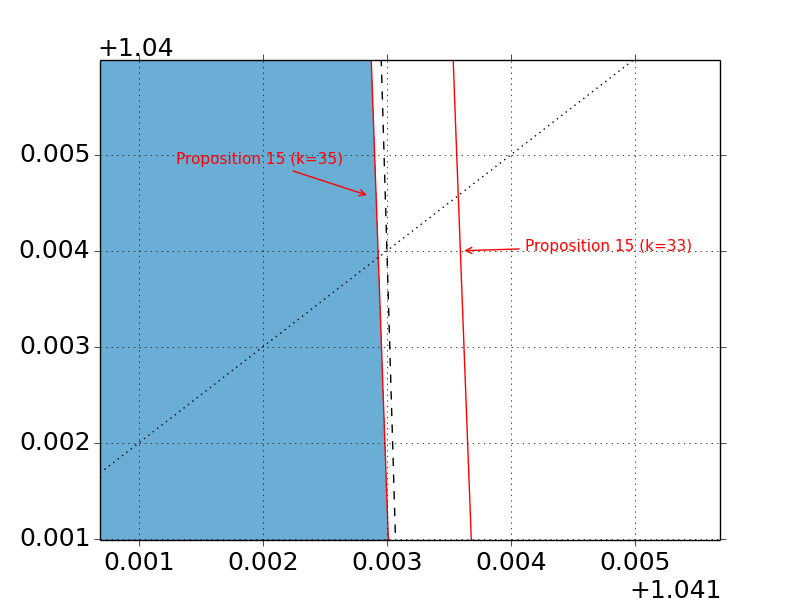}}

\caption{Different relations and boundaries between the regions of the
  $(p,q)$-plane where Theorem~\ref{impro_implicit}-\ref{aimplicit} and
  \ref{bimplicit}, as well as Proposition~\ref{beyond2} (with
  different values of $k$) apply. In all graphs $p$ corresponds to the
  horizontal axis and $q$ to the vertical axis and the dotted line shows $p=q$. \label{th10ab}}
\end{figure}

\begin{figure}
\centerline{
\includegraphics[height=6cm, angle=0]{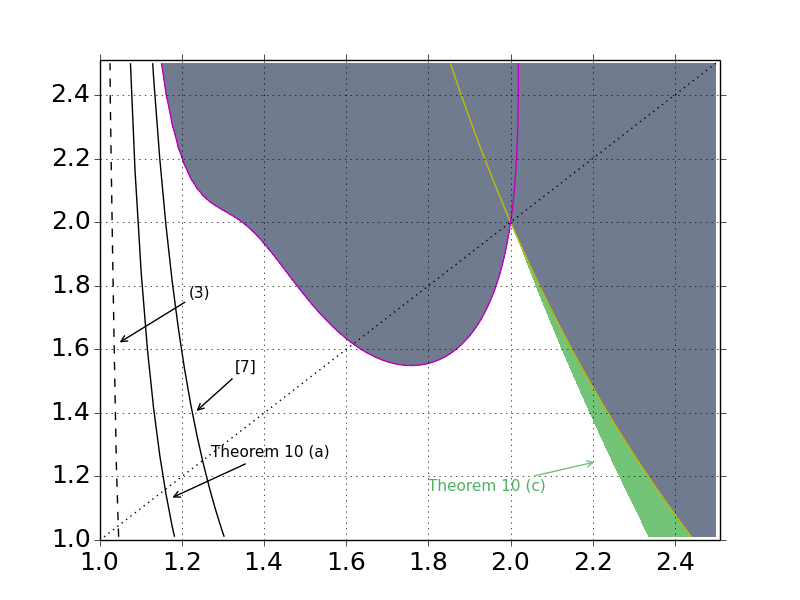}\hspace{-6mm}
\includegraphics[height=6cm, angle=0]{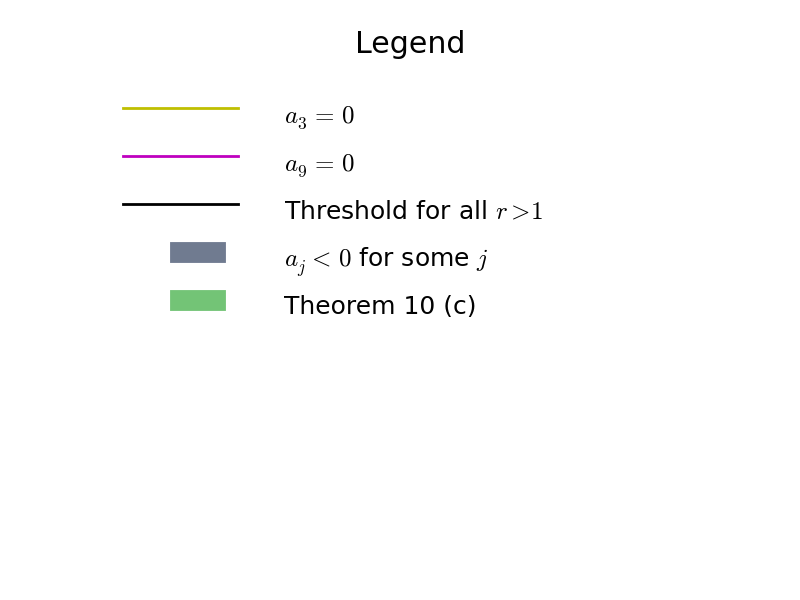}}
\caption{Region of the $(p,q)$-plane where
  Theorem~\ref{impro_implicit}-\ref{cimplicit} applies. Even when we
  know $A$ is invertible in this region as a consequence of
  Theorem~\ref{impro_implicit}-\ref{aimplicit}, the upper bound on the
  Riesz constant  provided by \eqref{trick3} improves upon that
  provided by \eqref{trick1} (case $r=2$). In this graph $p$
  corresponds to the horizontal axis and $q$ to the vertical axis and the dotted line shows $p=q$.  \label{th10c}}
\end{figure}

\end{document}